\newtheorem{theorem}{Theorem}
\newtheorem{lemma}[theorem]{Lemma}
\newtheorem{proposition}[theorem]{Proposition}
\newtheorem{corollary}[theorem]{Corollary}
\theoremstyle{definition}
\newtheorem{example}{Example} 
\newtheorem{definition}{Definition}    
\newcommand{\RR}{\mathbb{R}}
\newcommand{\ZZ}{\mathbb{Z}}
\newcommand{\FF}{\mathcal{F}}
\newcommand{\LL}{\mathcal{L}}
\newcommand{\XX}{\mathcal{X}}
\newcommand{\holoL}{H_\LL}
\newcommand{\gamaL}{\Gamma_{\LL}}
\newcommand{\Jup}{\widetilde{J}^\uparrow}
\newcommand{\one}{\mathbbm{1}}
\title[Projections of Patterns and Mode Interactions \today]{Projections of Patterns and Mode Interactions \\\today}
\author[ S.B.S.D.Castro, I.S.Labouriau, J.F. Oliveira \today]{ Sofia B.S.D. Castro, Isabel S. Labouriau, Juliane F. Oliveira}
\address{Centro de Matem\'atica da Universidade do Porto, Rua do Campo Alegre 687, 4169-007, Porto, Portugal \\ and
J.F. Oliveira, I.S.Labouriau --- Departamento de Matem\'atica, Faculdade de Ci\^encias da Universidade do Porto,Portugal\\
S.B.S.D.Castro --- Faculdade de Economia, Universidade do Porto, Rua Dr. Roberto
Frias, 4200-464, Porto, Portugal}
\subjclass[2010]{Primary 37G40 ; Secondary     52C22 }
\begin{document}   

\begin{abstract}
We study solutions of bifurcation problems with periodic boundary conditions, with periods in an $n+1$-dimensional lattice and their projection  into $n$-dimensional space through integration of the last variable.
We show that generically the projection of  a single mode solution is a mode interaction.
This can be applied to the study of black-eye patterns.
\end{abstract}

\maketitle

\section{Introduction}
%To be written. 
%
%Planforms and patterns.
%
%Black-eye patterns. Are they projections? Are they mode interactions?
The aim of this article is to contribute to the understanding of models for the formation of periodic patterns in a thin layer.
Experiments in thin layers are often modelled as 2-dimensional rather than 3-dimensional problems. The observation of results in such experiments is achieved through a 2-dimensional medium which is the top layer. In such instances, the observed patterns are 2-dimensional projections of the 3-dimensional objects in the thin layer.

An early example consists of the results found in reaction-diffusion experiments in the Turing instability regime, \cite{turing52}. Typically, a reaction occurs in a thin layer of gel, fed by diffusion from one or two faces with chemicals contained in stirred tanks. The first reaction that provided Turing instabilities was in experiments on the chlorite-iodide-malonic-acid (CIMA) reaction, \cite{castets90}.
The pattern itself and its observed state can occur in different dimensions, see \cite{kedubo91, ouswi95, gomes99}. This happens for instance when an experiment is done in a 3-dimensional medium but the patterns are only observed on its surface, a 2-dimensional object, see \cite[Section 4]{kedubo91}. The interpretation of this 2-dimensional outcome is subject to discussion: the black-eye pattern observed by \cite{ouswi95} has been explained both as a mode interaction in 2 dimensions in \cite{gunaouswi94,Zhou2002}, and described as a projection of a fully 3-dimensional pattern in \cite{gomes99}. 
The main challenge is to choose one of these descriptions. 
Our results in Section \ref{secIrreducible} indicate that these descriptions may coincide. We show that often the projection of a 3-dimensional single mode (the explanation provided in \cite{gomes99}) bifurcation is a 2-dimensional mode interaction (the explanation supported by \cite{gunaouswi94,Zhou2002}). The comments of \cite{Zhou2002} concerning the monolayers that do or do not support the occurrence and observation of a black-eye pattern can be related to the width of the projection band (defined in Subsection \ref{subsecProjections}).
%...... to find out if the experimental black-eye pattern and its representation as projection are the same.

Projection has proved to be a good way to model experiments. What is believed to be the first evidence of projection on the CIMA reaction can be found in \cite[Chapter 13]{winfree01}. The author gives details about the geometry of the formation of wave patterns in malonic-acid reaction performed on sufficiently thin layers. In \cite{kedubo91} the authors conducted experiments on the CIMA reaction and aimed at describing experimental observations of spontaneous symmetry breaking phenomena associated with steady-state instabilities. In \cite[Section 4]{kedubo91}, the authors highlight the natural environment we must consider when we carry out CIMA reactions, in particular they state that all of their observations were based on projection of 3-dimensional structures. Moreover, the regions where Turing patterns are observed are associated by projection to a body-centred cubic lattice. More discussion on this can be found in \cite{dulos96, dekepper00, borckmans02, szalai15}. As reaction and diffusion progress bifurcations may occur. Bifurcating solutions are detected by observation of the top, 2-dimensional, layer. This poses the question of how solutions of a 3-dimensional object appear in their 2-dimensional observation.
	 
Although we are  motivated by the 2-dimensional projection of 3-dimensional objects, we study the relation between patterns in a $(n+1)$-dimensional space and their  projection in a $n$-dimensional space. 
Since a pattern consists of the level curves of periodic functions, we characterise the space of projected periodic functions, describe the projection of orbits in the dual lattice and provide a decomposition of the span of projected irreducible subspaces.
We achieve this by an algebraic approach to the relevant symmetry-related objects that contribute to the description of patterns.
After some preliminary explanations, the main results are stated at the end of Section~\ref{secPreliminary}.

Information about which symmetries are preserved under projection has been obtained by Labouriau and Pinho in \cite{labpin06,pinlab14}. In Oliveira {\em et al.} \cite{Olicalab15}, we describe which symmetries can lead to a projected function with hexagonal symmetry. In studying the dynamics of $(n+1)$-dimensional patterns by observing their $n$-dimensional projection, it is desirable to extract information from the symmetries that can be seen in the $n$-dimensional space directly. The present article contributes to this point.

\section{Bifurcation Problems with Euclidean Symmetry}\label{secPreliminary}
This section contains a rigorous formulation of the setting  of the article and a statement of its main results.
For this, definitions and basic results that are used in the article are stated here.
More information on crystallographic groups may be found in Miller \cite{miller72}, Armstrong \cite[chapters 24 to 26]{armstrong88}, Senechal \cite{senechal96}, and the International Tables for Crystallography (ITC) volume A  \cite{int-tab-a}. 
Results on  equivariant bifurcation theory can be found in  Golubitsky and Stewart's book \cite{goluste02}. For mode interactions we refer the reader to Castro \cite{castro93, castro94, castro95} and \cite[Ch XIX and XX]{goluste12}.

Let  $E(n+1)$ be the \emph{Euclidean group} of all isometries on $\RR^{n+1}$, that may be described as the semi-direct sum $E(n+1)\cong \RR^{n+1}\dotplus O(n+1)$. 
Its elements are ordered pairs, $(v,\delta)$, where $v \in \RR^{n+1}$ is a translation and $\delta$ is an element of the orthogonal group $O(n+1)$. 
The group operation is $ (v_1,\delta_1)\cdot (v_2,\delta_2)=(v_1+\delta_1 v_2,\delta_1\delta_2) $.
Elements $(v,\delta)\in E(n+1)$ act on functions $f:\RR^{n+1}\rightarrow \RR$ by 
$\left((v,\delta)\cdot f \right) (x)=f(\delta^{-1}(x-v))$.

Consider a one parameter family of partial differential equations
\begin{equation}\label{int}
\frac{\partial u}{\partial t}(x,t) = \FF(u(x,t),\lambda)
\end{equation}
where $\FF:\XX\times \RR\longrightarrow \mathbf{\mathcal{Y}}$ is an operator between suitable function spaces $\XX$ and $\mathcal{Y}$ and $\lambda \in \RR$ is a bifurcation parameter.
The function $u: \RR^{n+1}\times\RR\rightarrow \RR$ in $\XX$ is a function of a spatial variable $x \in \RR^{n+1}$ and of time $t$.

Suppose that $\FF$ is \emph{equivariant} under the Euclidean group $E(n+1)$, that is,
$$
\gamma\cdot\FF(u,\lambda) = \FF(\gamma\cdot u, \lambda), \ \ \text{for all} \ \ \gamma \in E(n+1).
$$
Equilibria of \eqref{int} are time independent solutions $u(x)$ that satisfy $\FF(u(x),\lambda) = 0$.
They are also called steady-states.

We give a brief description of  a standard method to use  symmetries to study the way steady-states  in \eqref{int} bifurcate  from the  trivial solution $\FF(0,\lambda)\equiv 0$.
Details may be found in  \cite{digolu92,goluste02}.
%The first step is to  restrict the problem to a subspace  $\XX_{\LL}$ of functions periodic under an \emph{$(n+1)$-dimensional lattice}, $\LL\subset  \RR^{n+1}$, a set generated over the integers by $n+1$ linearly independent elements $l_{1},\ldots,l_{n+1} \in \RR^{n+1}$. 
The first step is to consider an \emph{$(n+1)$-dimensional lattice}, $\LL\subset  \RR^{n+1}$, a set generated over the integers by $n+1$ linearly independent elements $l_{1},\ldots,l_{n+1} \in \RR^{n+1}$, and to restrict the problem to a space  $\XX_{\LL}$ of functions periodic under $\LL$. 

%We write: \[\LL = \langle l_{1},\ldots,l_{n+1}\rangle_{\ZZ}.\] 

There are natural symmetries in this space that we aim to explore to simplify the problem by applying the theory of bifurcation with symmetry.
The restricted problem is equivariant under the action of the group $\gamaL$, the largest group constructed from $E(n+1)$ that leaves the space $\XX_{\LL}$ invariant. 
Translations map $\XX_{\LL}$ into itself, the action  of translations on $\XX_{\LL}$ is that of the torus 
$\mathbb{T}^{n+1}=\RR^{n+1}/\LL$.
The subgroup of $O(n+1)$ that maps $\LL$ to itself, and thus leaves $ \XX_\LL$  invariant, forms the \emph{holohedry} of $ \LL$ 
(called \emph{point symmetry of the lattice} in \cite{int-tab-a}) and is denoted by $\holoL$. 
The holohedry is always  finite, see \cite[Sec. 2.4.2]{senechal96}. 
The group $\gamaL$ is thus compact and  can  be written as a semi-direct sum
$$
\gamaL = \RR^{n+1}/\LL  \dotplus  \holoL=
\mathbb{T}^{n+1}  \dotplus  \holoL .
$$

The next step is to analise the bifurcation of solutions to the restricted problem.
A steady-state bifurcation at $\lambda=0$ occurs when the linearisation of $\FF(0,0)$ has a non-trivial kernel 
$V\subset \XX_\LL$.
The kernel is always $\gamaL$-invariant  in the sense that if $f\in V$ then $\gamma\cdot f\in V$ for all $\gamma\in\gamaL$.
Since $\gamaL$ is compact, we expect $V$ to be finite dimensional.
This simplifies the problem considerably, as the  study of bifurcating solutions is reduced to a   
$\gamaL$-equivariant bifurcation problem defined in a finite dimensional space $V$.
It also allows the use of equivariant bifurcation theory, where solution branches  can then  be obtained using the Equivariant Branching Lemma  \cite[Lemma 1.31]{goluste02}, independently proved by Cicogna \cite{Cicogna} and Vanderbauwhede \cite{Vanderbauwhede}.

Next we provide some elementary definitions.
Let $\Gamma$ be a symmetry group of isometries acting on a vector space $V$.
The \emph{orbit} of $v\in V$ under this action is the set $\Gamma\cdot v=\{\gamma v:\ \gamma\in\Gamma\}$.
The \emph{isotropy subgroup} $\Sigma_v$ of $v\in V$ is $\Sigma_v=\{\gamma\in\Gamma:\ \gamma v=v\}$.
%The maximum reduction can be obtained for a symmetry group $\Gamma$ acting on a finite-dimensional vector space $V$ as follows.
We say that a vector subspace $W\subset V$  is \emph{$\Gamma$-invariant} if  $\gamma\in\Gamma$ and  $v\in W$ implies $\gamma v\in W$.
A $\Gamma$-invariant subspace  $W\subset V$ is \emph{$\Gamma$-irreducible} if $W$ contains no proper non-trivial $\Gamma$-invariant subspace. In this case, a problem defined in $W$ cannot be reduced further.

Consider a $\Gamma$-equivariant bifurcation problem :
\begin{equation}\label{eq1}
\frac{dz}{dt} = g(z,\lambda)
\qquad\mbox{with} \qquad g(0,\lambda)=0 
%, \ \ \  .
\end{equation}
with $g:V\times\RR\longrightarrow V$ satisfying $g(\gamma z,\lambda)=\gamma g(z,\lambda)$ for all $\gamma\in\Gamma$ and all $(z,\lambda)\in V\times\RR$.	
\begin{definition}\label{mode} 
The bifurcation problem \eqref{eq1} has:
\begin{itemize}
\item
 a \emph{single mode bifurcation} at $\lambda=0$
if the  kernel of $(dg)_{0,0}$ is non-trivial and  $\Gamma$-irreducible;
\item 
an \emph{$r$-mode interaction} at $\lambda=0$ if the kernel of $(dg)_{0,0}$ can be decomposed as the direct sum of $r>1$ non-trivial components that are $\Gamma$-irreducible.
\end{itemize}
\end{definition}

A characterisation of $\gamaL$-irreducible subspaces of $\XX_\LL$ is given by 
Dionne and Golubitsky \cite{digolu92}, as follows.

Let $\langle\cdot,\cdot\rangle$ be the usual inner product in $\RR^{n+1}$. We assume that all the functions 
%$f:\RR^{n+1} \rightarrow \RR$ 
in $\XX_\LL$ admit a Fourier expansion in terms of the \emph{waves} 
$$
\omega_{k}(x,y) = exp(2\pi i \langle k,(x,y)\rangle),
$$
where $k$ is a \emph{wave vector} in the dual lattice, 
$\LL^{*} = \lbrace k \in \RR^{n+1}; \ \langle k, l_{i} \rangle \in \ZZ, \ i = 1, \cdots, \ n+1 \rbrace$, of $\LL$
and with the notation $(x,y)\in\RR^{n+1}$, $x\in\RR^n$, $y\in\RR$.

Subspaces of $\XX_\LL$ that are $\gamaL$-irreducible must be, in particular, $\mathbb{T}^{n+1}$-invariant.
Given $\ell \in \mathbb{T}^{n+1}$ and $k \in \LL^{*}$ we have 
$\ell\cdot \omega_{k}(x,y) = \omega_{k}(-\ell)\omega_{k}(x,y)$.
Hence, the two-dimensional subspaces 
\begin{equation}\label{V_{k}}
V_{k} =  \lbrace Re(z\omega_{k}(x,y)); \ z \in \mathbb{C}\rbrace
\end{equation}
are $\mathbb{T}^{n+1}$-irreducible. Morever, $V_{k}$ and $V_{k^\prime}$ are distinct $\mathbb{T}^{n+1}$ representations if $k \neq \pm k^\prime$. 

The action of $\gamma\in\holoL$ on $\omega_k(x,y)$ satisfies
\begin{equation}\label{gamaOfOmega}
\gamma\omega_k(x,y)= exp(2\pi i \langle k,\gamma^{-1}(x,y)\rangle)
= exp(2\pi i \langle \gamma k,(x,y)\rangle)=\omega_{\gamma k}(x,y) 
\end{equation}
by the orthogonality of $\gamma$.
Then, we have:
\begin{proposition}[Dionne and Golubitsky \cite{digolu92}]\label{irredutivel}
The space $V = V_{k_{1}}\oplus\ldots\oplus V_{k_{s}}\subset \mathcal{\LL}$ is $\Gamma_{\LL}$-irreducible if and only if $\lbrace\pm k_{1},\ldots ,\pm k_{s}\rbrace$ is an $H_{\LL}$-orbit in $\LL^{*}$. In particular, $2s$ divides the order of $H_{\LL}$.
\end{proposition}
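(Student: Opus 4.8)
The plan is to prove the equivalence by first reducing the problem from full $\gamaL$-irreducibility to a two-step analysis: $\mathbb{T}^{n+1}$-invariance (which forces $V$ to be a sum of the spaces $V_k$) and then $\holoL$-action (which permutes these summands). First I would observe that any $\gamaL$-invariant subspace is in particular $\mathbb{T}^{n+1}$-invariant, and since the $V_k$ for $k\in\LL^*/\{\pm 1\}$ are pairwise inequivalent irreducible $\mathbb{T}^{n+1}$-representations, isotypic decomposition forces any $\mathbb{T}^{n+1}$-invariant subspace to be a direct sum of some collection of the $V_k$; thus writing $V=V_{k_1}\oplus\cdots\oplus V_{k_s}$ loses no generality. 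The key computational input is \eqref{gamaOfOmega}: for $\gamma\in\holoL$ we have $\gamma\cdot V_k = V_{\gamma k}$, so the action of $\holoL$ on $V$ simply permutes the summands according to the action of $\holoL$ on the set $\{k_1,\dots,k_s\}\subset\LL^*/\{\pm1\}$. (One must note $V_k=V_{-k}$, so the relevant set is $\{\pm k_1,\dots,\pm k_s\}$ in $\LL^*$.)

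For the forward direction, suppose $V$ is $\gamaL$-irreducible. If $\{\pm k_1,\dots,\pm k_s\}$ were not a single $\holoL$-orbit, it would split into at least two $\holoL$-invariant pieces, and the corresponding sub-sums of the $V_{k_j}$ would each be a proper nontrivial $\gamaL$-invariant subspace (they are $\mathbb{T}^{n+1}$-invariant by construction and $\holoL$-invariant by the permutation description), contradicting irreducibility. Hence $\{\pm k_1,\dots,\pm k_s\}$ must be a single $\holoL$-orbit in $\LL^*$.

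For the converse, suppose $\{\pm k_1,\dots,\pm k_s\}$ is a single $\holoL$-orbit. Let $0\neq W\subset V$ be $\gamaL$-invariant; I want to show $W=V$. Since $W$ is $\mathbb{T}^{n+1}$-invariant, by the isotypic argument $W$ is a direct sum of some subcollection of the $V_{k_j}$ — say $W\supseteq V_{k_1}$ after relabelling (it contains at least one $V_{k_j}$ because it is nonzero and $\mathbb{T}^{n+1}$-invariant). Since $W$ is also $\holoL$-invariant and $\holoL$ acts transitively on $\{\pm k_1,\dots,\pm k_s\}$, for every $j$ there is $\gamma\in\holoL$ with $\gamma k_1=\pm k_j$, whence $V_{k_j}=\gamma\cdot V_{k_1}\subseteq W$. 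Therefore $W\supseteq V_{k_1}\oplus\cdots\oplus V_{k_s}=V$, so $V$ is $\gamaL$-irreducible. Finally, the divisibility claim follows because $\{\pm k_1,\dots,\pm k_s\}$ being an $\holoL$-orbit means its cardinality $2s$ divides $|\holoL|$ by the orbit–stabiliser theorem.

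The main obstacle, and the point deserving the most care, is the isotypic-decomposition step: one must justify rigorously that every closed $\mathbb{T}^{n+1}$-invariant subspace of $\XX_\LL$ (within the finite-dimensional kernel $V$ we work in, so closedness is automatic) is exactly a sum of the $V_k$'s and cannot "cut across" a single $V_k$ or mix two of them. This rests on the facts that each $V_k$ is $\mathbb{T}^{n+1}$-irreducible and that $V_k\not\cong V_{k'}$ as $\mathbb{T}^{n+1}$-representations for $k\neq\pm k'$, both of which are recorded in the excerpt just before the proposition; invoking these, the standard representation-theoretic lemma that invariant subspaces are sums of isotypic components (each of which, here, is a single $V_k$ since the multiplicities are one) closes the gap. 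The permutation action of $\holoL$ on summands is then a direct consequence of \eqref{gamaOfOmega} and requires no further subtlety.
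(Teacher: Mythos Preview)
Your proof is correct and well-organised. Note, however, that the paper does not supply its own proof of this proposition: it is quoted from Dionne and Golubitsky \cite{digolu92} and stated without argument. What the paper \emph{does} provide, in the paragraph immediately preceding the proposition, is exactly the three ingredients you invoke --- that each $V_k$ is $\mathbb{T}^{n+1}$-irreducible, that $V_k$ and $V_{k'}$ are inequivalent $\mathbb{T}^{n+1}$-representations for $k\neq\pm k'$, and the identity \eqref{gamaOfOmega} giving $\gamma\cdot V_k=V_{\gamma k}$ --- so your argument is precisely the one the paper's setup is pointing toward. One small caveat worth noting in the divisibility claim: the conclusion that the orbit $\{\pm k_1,\ldots,\pm k_s\}$ has cardinality exactly $2s$ tacitly assumes $k_j\neq 0$ for all $j$, which is implicit in the context (the case $k=0$ gives the one-dimensional trivial representation and is excluded).
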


We say that $V$ is  \emph{generated by the orbit} of $k$.
Since $H_{\LL} \subset O(n+1)$, it follows that all the $k_{j}$ ($j=1,\hdots,s$) have the same norm.

\subsection{Projections of $\LL$-periodic Functions}\label{subsecProjections}

\begin{definition}\label{defProjection}
For \(f \in \XX_{\LL}$ and \(y_{0} > 0$, \emph{the projection operator} $\Pi_{y_{0}}$ is given by:
$$
\Pi_{y_{0}}(f)(x) = \int _{0}^{y_{0}}f(x,y)dy .
$$
The region $\{(x,y)\in\RR^{n+1}:\ 0\le y\le y_0\}$ is called the  \emph{band of projection}. 
\end{definition}	

Hence, we are interested in describing the effect of the projection $\Pi_{y_0}$ on $\gamaL$-irreducible subspaces of $\XX_\LL$.
%For this we need some more definitions.
%
%Given a subgroup $\Gamma$ of $E(n+1)$, the kernel of the homomorphism $\phi:  \Gamma  \rightarrow  O(n+1)$ given by $\phi(v,\delta)=\delta$ is  called the \emph{translation subgroup} of $\Gamma$. 
%A \emph{crystallographic group} is a discrete subgroup of $E(n+1)$, whose translation subgroup is an $(n+1)$-dimensional lattice $\LL$.	
%
%We consider the space, $\XX_{\Gamma}\subset\XX_\LL$, of $\Gamma$-invariant functions, given by
%$$
%\XX_{\Gamma} = \lbrace f:\RR^{n+1}\rightarrow\RR; \ \gamma\cdot f = f, \ \forall \gamma \in \Gamma \rbrace
%$$

Pinho and Labouriau \cite{labpin06,pinlab14} have characterised the group of symmetries of projected functions in the general case of functions invariant under a crystallographic group.
Applying  \cite[Proposition 3.1]{labpin06}, for $n=1$, or \cite[Theorem 1.2]{pinlab14}, for general $n$, to the special case where the crystallographic group only contains translations, it follows:
%From their results it follows:

\begin{proposition}\label{corollaryThPinho}
	All functions in $\Pi_{y_{0}}(\XX_\LL)$ are invariant under the action of the translation $v\in\RR^n$ if and only if one of the following conditions holds:
\begin{enumerate}
\renewcommand{\theenumi}{\Roman{enumi}}
\renewcommand{\labelenumi}{{\theenumi})}
\item $(v,0)\in \LL$;
\item $(0,y_{0})\in \LL$ and  $(v,y_{1})\in \LL$, for some $y_{1}\in\RR$.
\end{enumerate}
\end{proposition}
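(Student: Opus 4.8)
The plan is to pass to the Fourier side, reduce the invariance statement to an arithmetic condition on the dual lattice $\LL^*$, and then split the analysis according to whether $(0,y_0)\in\LL$. First I would compute the projection of a single wave: writing $k=(k_x,k_y)\in\LL^*$ with $k_x\in\RR^n$, $k_y\in\RR$, and setting $\omega_{k_x}(x)=\exp(2\pi i\langle k_x,x\rangle)$, one gets
$$
\Pi_{y_0}(\omega_k)(x)=\omega_{k_x}(x)\int_0^{y_0}\exp(2\pi i k_y y)\,dy ,
$$
so $\Pi_{y_0}(\omega_k)=0$ exactly when $k_y\neq0$ and $k_yy_0\in\ZZ$, while otherwise $\Pi_{y_0}(\omega_k)=c_k\,\omega_{k_x}$ with $c_k\neq0$. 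Since the translation by $v\in\RR^n$ sends $\omega_{k_x}$ to $\exp(-2\pi i\langle k_x,v\rangle)\,\omega_{k_x}$, it fixes $\omega_{k_x}$ iff $\langle k_x,v\rangle\in\ZZ$. Applying this termwise to the Fourier expansion of an arbitrary $f\in\XX_\LL$ and using linearity of $\Pi_{y_0}$ and of the translation action, the assertion that every function in $\Pi_{y_0}(\XX_\LL)$ is fixed by the translation by $v$ becomes the arithmetic condition
$$
(P)\colon\quad\langle k_x,v\rangle\in\ZZ\ \text{ for every }k=(k_x,k_y)\in\LL^*\text{ with }k_y=0\text{ or }k_yy_0\notin\ZZ .
$$
From here I would use only the lattice biduality $w\in\LL\iff\langle k,w\rangle\in\ZZ$ for all $k\in\LL^*$, and the two linear functionals $k\mapsto\langle k,(v,0)\rangle=\langle k_x,v\rangle$ and $k\mapsto\langle k,(0,y_0)\rangle=k_yy_0$ on $\LL^*$.

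Suppose first $(0,y_0)\in\LL$. Then $k_yy_0\in\ZZ$ for every $k\in\LL^*$, so in $(P)$ only the alternative $k_y=0$ can hold, and $(P)$ reduces to: $\langle k_x,v\rangle\in\ZZ$ for all $k_x$ in $\LL_0^*:=\{k_x:(k_x,0)\in\LL^*\}$. Since $(0,y_0)$ spans the kernel $\{0\}\times\RR$ of the projection $\pi\colon\RR^{n+1}\to\RR^n$, the image $\Lambda:=\pi(\LL)$ is a lattice of rank $n$, and the identity $\langle k_x,\pi(w)\rangle=\langle(k_x,0),w\rangle$ identifies $\LL_0^*$ with the dual lattice $\Lambda^*$. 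Hence $(P)$ holds iff $v\in(\Lambda^*)^*=\Lambda$, i.e.\ iff $(v,y_1)\in\LL$ for some $y_1\in\RR$; together with $(0,y_0)\in\LL$ this is precisely condition (II).

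Suppose now $(0,y_0)\notin\LL$; here I claim $(P)$ is equivalent to $(v,0)\in\LL$, which is condition (I). If $(v,0)\in\LL$ then $\langle k_x,v\rangle=\langle k,(v,0)\rangle\in\ZZ$ for every $k\in\LL^*$, so $(P)$ holds. Conversely, assume $(P)$ and suppose for contradiction that some $k^0\in\LL^*$ has $\langle k^0_x,v\rangle\notin\ZZ$; by $(P)$ this forces $k^0_y\neq0$ and $k^0_yy_0\in\ZZ$. Since $(0,y_0)\notin\LL$, biduality gives $k^1\in\LL^*$ with $k^1_yy_0\notin\ZZ$, and then $(P)$ applied to $k^1$ yields $\langle k^1_x,v\rangle\in\ZZ$. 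But then $k^0+k^1\in\LL^*$ has $\langle k^0_x+k^1_x,v\rangle\notin\ZZ$ (non-integer plus integer) and $(k^0_y+k^1_y)y_0\notin\ZZ$ (integer plus non-integer), contradicting $(P)$. Hence $\langle k_x,v\rangle\in\ZZ$ for all $k\in\LL^*$, i.e.\ $(v,0)\in\LL$. To finish: when $(0,y_0)\notin\LL$ condition (II) is vacuously false and the statement is exactly the equivalence just shown; when $(0,y_0)\in\LL$ condition (I) implies (II), so ``(I) or (II)'' collapses to (II), again as shown. I expect the only point needing real care to be the verification that $\pi(\LL)$ is a full-rank lattice and that $\LL_0^*=\pi(\LL)^*$ in the first case; the rest is routine bookkeeping with the two functionals above. (Alternatively, the proposition can be obtained by specializing the general crystallographic results of \cite{labpin06,pinlab14} to translation-only groups, as indicated before the statement.)
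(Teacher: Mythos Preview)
Your argument is correct. The reduction to the arithmetic condition $(P)$ via the Fourier computation is exactly right, and the two cases are handled cleanly: in the case $(0,y_0)\in\LL$ the identification $\LL_0^*=\pi(\LL)^*$ followed by biduality gives $v\in\pi(\LL)$, while in the case $(0,y_0)\notin\LL$ the short contradiction with $k^0+k^1$ is a nice trick and establishes $(v,0)\in\LL$.

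The paper, however, does not prove this statement directly: it simply records it as the specialisation of the general crystallographic results of Labouriau and Pinho \cite[Proposition~3.1]{labpin06} and \cite[Theorem~1.2]{pinlab14} to the case where the group consists only of translations. You already note this alternative at the end of your write-up. So your route is genuinely different: you give a short, self-contained Fourier/biduality argument tailored to the translation-only setting, whereas the paper imports a much more general theorem. What you gain is independence from the cited machinery and a transparent explanation of why precisely the two conditions (I) and (II) arise (they correspond exactly to the two ways a wave $\omega_k$ can survive the projection and still be $v$-invariant). What the paper's approach buys is brevity and a uniform treatment that extends to symmetries beyond translations, which it needs anyway for the induced orthogonal symmetries discussed immediately afterwards. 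The one point you flag as needing care --- that $\pi(\LL)$ is a rank-$n$ lattice when $\LL\cap(\{0\}\times\RR)$ has rank~$1$, and the identification of $\LL_0^*$ with $\pi(\LL)^*$ --- is standard and your sketch of it is adequate.
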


We denote by $\Pi_{y_0}(\LL) = \widetilde{\LL}$ the set of all translations under which the functions  in $\Pi_{y_{0}}(\XX_\LL)$ are invariant.

The action of $\Gamma_\LL$ on $\XX_\LL$ induces a similar action of a group on $\Pi_{y_{0}}(\XX_\LL)$.
Translations act as the  torus $\RR^n/\widetilde{\LL}$.
To see this, let $g=\Pi_{y_0}(f)$.
%=\int_0^{y_0}f(x,y)dy$
If $(v,0)\in\LL$ then $g(x-v)=\int_0^{y_0}f(x-v,y-0)dy\in\Pi_{y_0}(\XX_\LL)$.
If $(0,y_0)$ and $(v,y_1)\in\LL$, since $f(x,y+y_0)=f(x,y)$ then, 
$$
\int_0^{y_0}f(x-v,y-y_1)dy=\int_{y_1}^{y_0+y_1}f(x-v,z)dz=g(x-v)\in\Pi_{y_0}(\XX_\LL).
$$

Given $\alpha\in O(n)$, define $\alpha_\pm\in O(n+1)$ by:
$$
%\sigma := \left(\begin{array}{cc}
%I_{n} & 0 \\ 
%0 & -1
%\end{array}\right), \ 
\alpha_{+} := \left(\begin{array}{cc}
\alpha & 0 \\ 
0 & 1
\end{array}\right) \ \mbox{and} \ \  
\alpha_{-} := \left(\begin{array}{cc}
\alpha & 0 \\ 
0 & -1
\end{array}\right) .
%\alpha_{-} := \sigma\alpha_{+}.
$$
If either $\alpha_+$ or $\alpha_-\in\holoL$, then $\alpha$ belongs to the holohedry of $\widetilde{\LL}$.
For $g=\Pi_{y_0}(f)$ we get $\Pi_{y_0}(\alpha_+\cdot f)=\alpha \cdot g$.
If, in addition $(0,y_0)\in\LL$, then 
$$
\Pi_{y_0}(\alpha_-\cdot f)(x)=\int_0^{y_0} f(\alpha^{-1}x,- y)dy=\int_{-y_0}^{0} f(\alpha^{-1}x, z)dz=\alpha\cdot g(x)
$$
because we are integrating over a period.
We will denote by $\widetilde{J}=\Pi_{y_0}(\holoL)$ the group of  \emph{induced orthogonal symmetries}. 
It follows that $\alpha\in\widetilde{J}$ if and only if one of the conditions below holds:
\begin{enumerate}
\renewcommand{\theenumi}{\Roman{enumi}}
\renewcommand{\labelenumi}{{\theenumi})}
\item $\alpha_+\in\holoL$;
\item $(0,y_0)\in\LL$ and $\alpha_-\in\holoL$.
\end{enumerate}
%Thus, we will be interested in the action of $\Jup   = \lbrace \alpha_{+} , \alpha_-\in \holoL \rbrace$.

\subsection{Outline of the article and informal statement of results}

Given a lattice $\LL$ and its projection $\widetilde{\LL}=\Pi_{y_0}(\LL)$, the aim of this article is to relate functions in the space $\XX_\LL$ to their projections, that lie in the space $\XX_{\widetilde{\LL}}$, in order to obtain information about projected patterns and possible mode interactions.
The starting point is the following:
\begin{description}
\item[1st result]
Given a lattice $\LL$ with  $\widetilde{\LL}=\Pi_{y_0}(\LL)$, for almost all $y_0\in\RR$ we have $\Pi_{y_0}(\XX_\LL)=\XX_{\widetilde{\LL}}$.
\end{description}

This result is proved in Section~\ref{secProjectXL},  where we obtain explicitly necessary and sufficient conditions on $y_0$ under which 
$\Pi_{y_0}(\XX_\LL)=\XX_{\widetilde{\LL}}$. In such cases, all functions invariant under $\widetilde{\LL}$ can be obtained by the projection of a function invariant under $\LL$.
% the image of $\XX_\LL$ by the projection $\Pi_{y_0}$ is the whole function space $\XX_{\widetilde{\LL}}$.

Since solutions whose existence is guaranteed by the Equivariant Branching Lemma have less symmetry than the whole group, the relation between the symmetry of a function and its projection helps clarify what possible symmetries appear in the projected observation. When the symmetry of a function has a correspondence in the projected lattice, we know which possible symmetries to expectin the observed projected solutions. The Equivariant Branching Lemma then states that generically the solutions with maximal projected symmetry are observed.

The main result concerns mode interactions:
\begin{description}
\item[2nd result]
Given a lattice $\LL$ with  $\widetilde{\LL}=\Pi_{y_0}(\LL)$, generically and
for almost all $y_0\in\RR$ the projection of a single mode is a mode interaction.
\end{description}

Explicit conditions for the genericity and restrictions on the values of $y_0$ are given in Section~\ref{secIrreducible}, where this result is proved.
This shows the compatibility of the explanations provided for the black-eye pattern by \cite{gunaouswi94} and \cite{gomes99}: when the outcome of an experiment is observed through a projection, generically, what is originally a single mode solution is projected as an observed mode interaction. Hence a 3-dimensional single mode (as explained by \cite{gomes99}) may be perceived as a mode interaction (as interpreted by \cite{gunaouswi94}).

%The main result appears in Section~\ref{secIrreducible}, where we obtain conditions under which a single mode bifurcation in $\XX_\LL$  is projected into a mode interaction in $\XX_{\widetilde{\LL}}$, and show that this situation is rather common.
Some  algebraic results that are used in the proof are developed in 
Section~\ref{secL*}.
%that contains 
These results concern a description of the action of $\holoL$ on the dual lattice $\LL^*$ and its behaviour under projection. 
The results of Sections~\ref{secL*} and ~\ref{secIrreducible} are illustrated by the projection from the simple cubic lattice in $\RR^3$, in two different positions,
as well as the projection of the body centred cubic lattice, in a position that creates black-eyes.

\section{Projection of the space of $\LL$-periodic functions}\label{secProjectXL}
Given an $(n+1)$-dimensional lattice $\LL$ and its projection $\widetilde{\LL}=\Pi_{y_0}(\LL)$,  the projection of the space $\XX_\LL$ of all $\LL$-periodic functions does not necessarily coincide with the space of all functions with period in $\widetilde{\LL}$.
In this section we  characterise the situation when this is true.
We illustrate this with examples, the end of each one marked {\Large$\bullet$}.

Denote by $P:\RR^{n+1}\rightarrow\RR^{n}$  the  projection $P(x,y) = x$ and
 by $\lbrace  y = 0 \rbrace$ the space $\lbrace (x,y)\in \RR^{n+1}; \ y = 0\rbrace$.

\begin{lemma}\label{lemaLstar}
Let $\LL$ be an (n+1)-dimensional lattice with $\widetilde{\LL}=\Pi_{y_0}(\LL)$.
Then $\widetilde{\LL}^{*}\subset P(\LL^{*})$.
\end{lemma}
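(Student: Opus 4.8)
The plan is to unwind the definitions of the dual lattices involved and to use Proposition~\ref{corollaryThPinho}, which characterises $\widetilde{\LL}=\Pi_{y_0}(\LL)$ explicitly. Let $\widetilde{k}\in\widetilde{\LL}^{*}$; we must exhibit $k\in\LL^{*}$ with $P(k)=\widetilde{k}$, i.e.\ $k=(\widetilde{k},c)$ for some $c\in\RR$. By definition of the dual lattice, we need $\langle(\widetilde{k},c),l\rangle\in\ZZ$ for every $l\in\LL$.

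First I would split according to the two cases of Proposition~\ref{corollaryThPinho}, since $\widetilde{\LL}$ is generated by the translations $v\in\RR^n$ satisfying condition I) or II). In case I), $v$ comes from $(v,0)\in\LL$; in case II), either $(0,y_0)\in\LL$ and $(v,y_1)\in\LL$ for some $y_1$. The key observation is that the duality condition $\langle\widetilde{k},v\rangle\in\ZZ$ for all generators $v$ of $\widetilde{\LL}$, together with an appropriate choice of the last coordinate $c$, forces $\langle(\widetilde{k},c),l\rangle\in\ZZ$ for all $l\in\LL$. Concretely, if $(0,y_0)\in\LL$ one is led to take $c$ a multiple of $1/y_0$ (so that the pairing with $(0,y_0)$ is an integer), and then check that pairing with a general lattice generator $(v,y_1)$ gives $\langle\widetilde{k},v\rangle + c y_1\in\ZZ$; one uses that $(0,y_0)\in\LL$ to reduce $y_1$ modulo $y_0$, and then condition II) of Proposition~\ref{corollaryThPinho} says precisely that $v$ is a translation symmetry of the projection, hence $\langle\widetilde{k},v\rangle\in\ZZ$. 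In case I) the relevant lattice vectors have zero last coordinate, so $c$ can be chosen freely (e.g.\ $c=0$ works on those generators) and the constraint only comes from generators with nonzero last coordinate, handled as above. Assembling a single $c$ that works simultaneously for a full generating set of $\LL$ is the bookkeeping core of the argument.

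The main obstacle I anticipate is organising the case analysis cleanly: $\LL$ need not have a basis adapted to the splitting $\RR^{n+1}=\RR^n\times\RR$, so one cannot assume the generators $l_i$ have the convenient form $(v,0)$ or $(0,y_0)$. The cleanest route is to argue at the level of the sublattice $\LL_0=\LL\cap\{y=0\}$ and the quotient: $P(\LL_0)$ is a sublattice of $\RR^n$, and $\widetilde{\LL}$ differs from $P(\LL_0)$ by at most the extra generators coming from case II). One shows $\widetilde{\LL}^{*}\subset P(\LL_0)^{*}$, and then that every element of $P(\LL_0)^{*}$ (viewed with an arbitrary last coordinate) can be corrected in the last coordinate to land in $\LL^{*}$ — here is where $P(\LL)$ being a full-rank lattice in $\RR^n$ and $\LL$ being full-rank in $\RR^{n+1}$ guarantees enough freedom in $c$. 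Once the right choice of $c$ is pinned down, verifying $\langle(\widetilde{k},c),l_i\rangle\in\ZZ$ is a routine finite check over the generators, and it shows $(\widetilde{k},c)\in\LL^{*}$, hence $\widetilde{k}=P\bigl((\widetilde{k},c)\bigr)\in P(\LL^{*})$, as required.
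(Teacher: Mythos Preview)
Your approach via Proposition~\ref{corollaryThPinho} is essentially the paper's, but the case split should be on whether $(0,y_0)\in\LL$, not on conditions I) and II) applied to individual translations $v$. When $(0,y_0)\in\LL$, the proposition gives $\widetilde{\LL}=P(\LL)$; then for any $\tilde{k}\in\widetilde{\LL}^*$ and any $(v,z)\in\LL$ one has $\langle(\tilde{k},0),(v,z)\rangle=\langle\tilde{k},v\rangle\in\ZZ$ because $v\in P(\LL)=\widetilde{\LL}$, so simply $c=0$ works --- your reduction of $y_1$ modulo $y_0$ is unnecessary machinery here. In the complementary case $(0,y_0)\notin\LL$ one has $\widetilde{\LL}=P(\LL\cap\{y=0\})$, and here your $\LL_0$ idea is exactly right: extend a basis $(v_1,0),\ldots,(v_n,0)$ of $\LL_0=\LL\cap\{y=0\}$ to a basis of $\LL$ by some $(w,h)$ with $h\ne 0$, and choose $c$ so that $\langle\tilde{k},w\rangle+ch\in\ZZ$. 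The paper's treatment of this second case actually only writes out the inclusion $P(\LL^*)\subset\widetilde{\LL}^*$ and then asserts equality, so your sketch supplies the direction the paper leaves implicit.
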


\begin{proof}
We analyse the two cases of Proposition~\ref{corollaryThPinho} . 
If  $(0,y_0)\in\LL$,  then $\widetilde{\LL}= P(\LL)$. 
If $\tilde{k}\in \widetilde{\LL}^*$ and $(v,z)\in\LL$ then  $\langle\tilde{k},v\rangle=\langle(\tilde{k},z),(v,0)\rangle\in\ZZ$ because $v\in\widetilde{\LL}$ and hence $(\tilde{k},0)\in\LL^*$, therefore $\tilde{k}\in P(\LL^*)$, as we wanted.

If $(0,y_0)\not\in\LL$ then $\widetilde{\LL}= P(\LL\cap \lbrace  y = 0 \rbrace)$. 
If $(\tilde{k},z)\in\LL^*$ then for every $v\in\widetilde{\LL}$ we have  $(v,0)\in\LL$ and thus  $\langle(\tilde{k},z),(v,0)\rangle=\langle\tilde{k},v\rangle\in\ZZ$, hence $\tilde{k}\in\widetilde{\LL}^*$, therefore $\widetilde{\LL}^*=P(\LL^*)$.
\end{proof}
		
\begin{theorem}\label{eq_periodic_functions}
	Let $\LL\subset \RR^{n+1}$ be  an (n+1)-dimensional  lattice and $\widetilde{\LL} = \Pi_{y_{0}}(\LL)$. The space $\Pi_{y_{0}}(\XX_{\LL})$ is equal to $\XX_{\widetilde{\LL}}$ if and only if for each $\tilde{k} \in \widetilde{\LL}^{*}$ there exists $z \in \RR$ such that  $(\tilde{k}, z) \in \LL^{*}$ and $zy_{0} \notin \ZZ\setminus\lbrace 0\rbrace$.
\end{theorem}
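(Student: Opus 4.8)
The plan is to reduce the statement to an explicit computation of $\Pi_{y_0}$ on a single wave, followed by an orthogonality argument. Write a wave vector in $\LL^*$ as $k=(\tilde k,z)$ with $\tilde k\in\RR^n$, $z\in\RR$, and set $\tilde\omega_{\tilde k}(x)=\exp(2\pi i\langle\tilde k,x\rangle)$ for the corresponding wave on $\RR^n$. A direct computation gives
\[
\Pi_{y_0}\bigl(\omega_k\bigr)(x)=\tilde\omega_{\tilde k}(x)\int_0^{y_0}e^{2\pi izy}\,dy=c(z,y_0)\,\tilde\omega_{\tilde k}(x),
\]
where $c(0,y_0)=y_0$ and, for $z\neq0$, $c(z,y_0)=\bigl(e^{2\pi izy_0}-1\bigr)/(2\pi iz)$. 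Since $y_0>0$, the second expression vanishes exactly when $zy_0\in\ZZ$, so altogether $c(z,y_0)=0$ if and only if $zy_0\in\ZZ\setminus\{0\}$. Hence $\Pi_{y_0}$ sends each wave $\omega_k$ either to $0$ or to a nonzero multiple of $\tilde\omega_{P(k)}$, the first case occurring precisely when $zy_0\in\ZZ\setminus\{0\}$.

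From this one reads off both inclusions. Every function in $\Pi_{y_0}(\XX_\LL)$ is $\widetilde\LL$-periodic by the very definition of $\widetilde\LL$ (equivalently, by Proposition~\ref{corollaryThPinho}), so $\Pi_{y_0}(\XX_\LL)\subseteq\XX_{\widetilde\LL}$ with no hypothesis on $y_0$. For the reverse inclusion, $\XX_{\widetilde\LL}$ is spanned, in the Fourier sense, by the waves $\tilde\omega_{\tilde k}$ with $\tilde k\in\widetilde\LL^*$; by Lemma~\ref{lemaLstar} each such $\tilde k$ equals $P(k)$ for at least one $k\in\LL^*$, and, since distinct waves $\tilde\omega_{\tilde k}$ are mutually orthogonal, $\tilde\omega_{\tilde k}\in\Pi_{y_0}(\XX_\LL)$ if and only if some $k=(\tilde k,z)\in\LL^*$ has $c(z,y_0)\neq0$, that is $zy_0\notin\ZZ\setminus\{0\}$ --- otherwise the $\tilde k$-th Fourier coefficient of $\Pi_{y_0}(f)$ is $0$ for every $f\in\XX_\LL$, so $\tilde\omega_{\tilde k}\perp\Pi_{y_0}(\XX_\LL)$ and is not a projection. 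Thus $\XX_{\widetilde\LL}\subseteq\Pi_{y_0}(\XX_\LL)$ precisely when the stated condition holds for every $\tilde k\in\widetilde\LL^*$; combining the two inclusions proves the equivalence.

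Two points require care but do not alter the argument. First, $\XX_\LL$ and $\XX_{\widetilde\LL}$ consist of real functions, so I would run the argument on their complexifications, spanned by the $\omega_k$ and $\tilde\omega_{\tilde k}$, and return to the real statement by taking real parts; as $\Pi_{y_0}$ is real-linear this is routine. Second, ``admits a Fourier expansion'' refers to a closed span, so one needs membership in $\Pi_{y_0}(\XX_\LL)$ to be detectable coefficient by coefficient and one needs $\Pi_{y_0}$ to interact well with the completion chosen for $\XX_\LL$ --- this is the only place where the meaning of the ``suitable function spaces'' intervenes, and it is the main obstacle to a fully rigorous write-up; the algebraic core of the proof is nothing more than the integral $\int_0^{y_0}e^{2\pi izy}\,dy$ combined with Lemma~\ref{lemaLstar}.
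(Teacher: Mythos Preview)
Your argument is correct and follows essentially the same route as the paper: compute $\Pi_{y_0}$ on a single wave, use Proposition~\ref{corollaryThPinho} for the easy inclusion, and combine Lemma~\ref{lemaLstar} with the Fourier expansion for the other direction. Your orthogonality phrasing for the forward implication makes explicit what the paper compresses into a ``without loss of generality'' step, and your closing caveats about real versus complex and the topology of $\XX_\LL$ are honest remarks the paper leaves implicit, but the mathematical content is the same.
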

\begin{proof}
	Suppose that $\Pi_{y_{0}}(\XX_{\LL}) = \XX_{\widetilde{\LL}}$. 
	Then, for all $\omega_{\tilde{k}} \in \XX_{\widetilde{\LL}}$, there exists a non-zero function $f \in \XX_{\LL}$ such that
$$
\omega_{\tilde{k}}(x) = \Pi_{y_{0}}(f)(x) \neq 0.
$$

By Lemma~\ref{lemaLstar} we know that $\widetilde{\LL}\subset P(\LL)$.
Since $f \in \XX_{\LL}$ admits a Fourier expansion in terms of the waves $\omega_{k}$ then, without loss of generality we may take $f = c\ \omega_{(\tilde{k},z)}$, for some $(\tilde{k},z) \in \LL^{*}$. Moreover since $\Pi_{y_{0}}(f)(x) \neq 0$, then $\int_{0}^{y_{0}}w_{z}(y)dy \neq 0$. Therefore, $zy_{0} \notin \ZZ\setminus\lbrace 0\rbrace$.
	
	Conversely,  it is clear from Proposition~\ref{corollaryThPinho}  that $\Pi_{y_{0}}(\XX_\LL) \subseteq\XX_{\widetilde{\LL}}$.  To get the other inclusion, we want to show that all the wave functions  $\omega_{\tilde{k}} \in \XX_{\widetilde{\LL}}$ are in $\Pi_{y_{0}}(\XX_\LL)$. 
Suppose that for each $\tilde{k} \in \widetilde{\LL}^{*}$ there exists $z \in \RR$ such that  $(\tilde{k}, z) \in \LL^{*}$ and $zy_{0} \notin \ZZ\setminus\lbrace 0\rbrace$. 	 
Then,
$$
\Pi_{y_{0}}(\omega_{(\tilde{k}, z)}) = c_{0}\omega_{\tilde{k}}, \ \ \text{with} \ \ c_{0} = \int_{0}^{y_{0}}w_{z}(y)dy.
$$
Since $zy_{0} \notin \ZZ\setminus\lbrace 0\rbrace$, then $c_{0} \neq 0$. Therefore $\Pi_{y_{0}}(\XX_\LL) = \XX_{\widetilde{\LL}}$. 
\end{proof}			

From Theorem~\ref{eq_periodic_functions} it follows:	 
\begin{corollary} 
	Let $\LL\subset \RR^{n+1}$ be  an (n+1)-dimensional  lattice and $\widetilde{\LL} = \Pi_{y_{0}}(\LL)$.
%	 If $\widetilde{\LL}_s$ is rationally compatible with $\LL$, t
	 Then for almost all values of $y_0$ we have  $\Pi_{y_{0}}(\XX_{\LL}) = \XX_{\widetilde{\LL}}$.
\end{corollary}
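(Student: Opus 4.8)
The plan is to deduce the ``almost all $y_0$'' statement directly from the if-and-only-if criterion in Theorem~\ref{eq_periodic_functions}. The criterion says that $\Pi_{y_0}(\XX_\LL)=\XX_{\widetilde\LL}$ fails only when there is some $\tilde k\in\widetilde\LL^*$ such that \emph{every} $z\in\RR$ with $(\tilde k,z)\in\LL^*$ satisfies $zy_0\in\ZZ\setminus\{0\}$. So the strategy is to show that the set of ``bad'' values of $y_0$ — those for which such a $\tilde k$ exists — is a Lebesgue-null set (indeed countable), hence its complement has full measure.

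First I would observe, using Lemma~\ref{lemaLstar}, that $\widetilde\LL^*\subset P(\LL^*)$, so for each $\tilde k\in\widetilde\LL^*$ the fibre $Z_{\tilde k}=\{z\in\RR:(\tilde k,z)\in\LL^*\}$ is non-empty; it is a coset of the (rank at most one) subgroup $\{z:(0,z)\in\LL^*\}$ of $\RR$, hence either a single point or an arithmetic progression $z_0+m\cdot d$, $m\in\ZZ$, for some $d>0$. In either case $Z_{\tilde k}$ is a countable set. Next, fix $\tilde k$ and ask for which $y_0$ the criterion fails \emph{at that} $\tilde k$: we need $zy_0\in\ZZ\setminus\{0\}$ for all $z\in Z_{\tilde k}$. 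In particular, picking any single $z\in Z_{\tilde k}$ with $z\neq 0$ (if $Z_{\tilde k}=\{0\}$ the condition $zy_0=0\notin\ZZ\setminus\{0\}$ holds automatically, so this $\tilde k$ is never bad), the requirement $zy_0\in\ZZ$ forces $y_0\in\frac1z\ZZ$, a countable set. Thus the set of $y_0$ that are bad because of this particular $\tilde k$ is countable.

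Finally I would take the union over all $\tilde k\in\widetilde\LL^*$. Since $\widetilde\LL^*$ is a lattice (or a subgroup of a lattice) it is countable, and a countable union of countable sets is countable, hence of Lebesgue measure zero. Therefore for $y_0$ outside this null set the criterion of Theorem~\ref{eq_periodic_functions} is satisfied for every $\tilde k\in\widetilde\LL^*$, giving $\Pi_{y_0}(\XX_\LL)=\XX_{\widetilde\LL}$, which is the claim.

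I do not expect a serious obstacle here; the only point requiring a little care is that $\widetilde\LL$ itself depends on $y_0$ (through which case of Proposition~\ref{corollaryThPinho} applies), so strictly speaking $\widetilde\LL^*$ is not a fixed countable set as $y_0$ varies. The clean way around this is to note that there are only finitely many possibilities for $\widetilde\LL$ up to the discrete data involved — concretely, either $(0,y_0)\in\LL$ (which happens for at most countably many $y_0$, since $\{y:(0,y)\in\LL\}$ is countable) or $(0,y_0)\notin\LL$, in which case $\widetilde\LL=P(\LL\cap\{y=0\})$ is independent of $y_0$. On the countable set where $(0,y_0)\in\LL$ we need nothing, and off it $\widetilde\LL^*=P(\LL^*)$ is a single fixed countable set, so the argument above applies verbatim. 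Hence the bad set is contained in a countable set and the corollary follows.
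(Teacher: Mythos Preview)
Your argument is correct and follows essentially the same route as the paper: use Lemma~\ref{lemaLstar} to produce a lift $(\tilde k,z)\in\LL^*$ for each $\tilde k$, then observe that the set of $y_0$ violating the criterion of Theorem~\ref{eq_periodic_functions} is contained in a countable union of sets $\tfrac{1}{z}\ZZ$, hence null. The paper's version is marginally slicker in one respect: it defines the exceptional set $K=\bigcup_{(k_1,k_2)\in\LL^*,\,k_2\neq 0}\tfrac{1}{k_2}\ZZ$ directly in terms of $\LL^*$, so $K$ is manifestly independent of $y_0$ and the issue you raise in your last paragraph (that $\widetilde\LL$, and hence $\widetilde\LL^*$, depends on $y_0$) never arises.
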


\begin{proof}
%	Suppose that $\widetilde{\LL}_s$ is rationally compatible with $\LL$, then by Proposition \ref{dual_rationality}, 
	By Lemma~\ref{lemaLstar}, for each $\tilde{k} \in \widetilde{\LL}^{*}$ there exists $z \in \RR$ such that  $(\tilde{k}, z) \in \LL^{*}$. 
Let $(k_1,k_2) \in \RR^n\times\RR$ and consider the discrete set 
$$
K =\bigcup_{\substack{k_{2}\neq 0; \\ (k_{1}, k_{2}) \in \LL^{*}}}\ZZ\cdot \frac{1}{k_{2}}
$$
%K =\displaystyle\bigcup_{\substack{k_{2}\neq 0; \\ (k_{1}, k_{2}) \in \LL^{*}}}\ZZ\cdot \frac{1}{k_{2}}$, 
then the complement of $K$ is a dense set in $\RR$. 
	
For $y_{0} \in \RR\setminus K$, we have $\int_{0}^{y_{0}}w_{k_{2}}(y)dy \neq 0$. This implies that, for all $y_{0} \notin K$, we have  $\Pi_{y_{0}}(\XX_{\LL}) = \XX_{\widetilde{\LL}}$.
\end{proof}

\section{Projection of  $\holoL$-orbits in $\LL^*$}\label{secL*}
We want to describe the effect of $\Pi_{y_0}$ on irreducible subspaces of $\XX_\LL$.
Proposition~\ref{irredutivel} tells us that this is done by studying  the orbit of elements $k$ of $\LL^*$ under the holohedry $\holoL$. 
The projection of the irreducible subspace generated by this orbit can be decomposed into irreducible subspaces under the action of symmetries of $\widetilde{\LL}=\Pi_{y_0}(\LL)$ that lie in the group  $\widetilde{J} =\Pi_{y_0}(\holoL)$ that was defined in Subsection~\ref{subsecProjections} above.
This is equivalent to  decomposing $P(\holoL\cdot k)$ into $\widetilde{J} $-orbits in $\widetilde{\LL}^*$.
 To do this, we decompose $\holoL $ into subsets such that the orbit of $k$ under each of these subsets is projected into exactly one $\widetilde{J} $-orbit.
	
	Define the subset $\Jup  $ of $\holoL $ as
\begin{equation}\label{J_til_suspendido}
\Jup   = \lbrace \alpha_{+} \in \holoL ; \ \alpha \in \widetilde{J} \rbrace\cup \lbrace \alpha_{-} \in \holoL ; \ \alpha \in \widetilde{J} \rbrace.
\end{equation}
Then for every $k = (\tilde{k},z) \in \LL^{*}$,
the projection $P(\Jup\cdot  k)\subset \widetilde{J}\cdot \tilde{k}$. However,  the set of $\delta \in \holoL $ such that $P(\delta k) \in \widetilde{J}\cdot  \tilde{k}$ is in general larger than $\Jup  $. Next, we describe the subset of $\delta\in\holoL $ such that the projection by $P$ of $\delta k$ lies in $\widetilde{J}\cdot \tilde{k}$.
	
%	By checking the group axioms, it is easy to see that the set $\Jup  $ is a subgroup, but not necessarily a normal subgroup of $J$. To see this, consider the subgroup $\widehat{\Gamma}$ of $\Gamma$, whose elements are of the form
%\begin{equation}\label{Gamma_chapeu}
%\left((v,y),\alpha_\pm\right);\ \alpha \in O(n), \ (v,y) \in \RR^{n}\times\RR.
%\end{equation}
%See\cite{pinho06, Olicalab15}. As we can see, the set $\Jup  $ is a subgroup of the point group $\Jup $ of $\widehat{\Gamma}$. In particular, when the projection band satisfies $(0,y_{0}) \in \LL$, then $\Jup   = \Jup $.
	
	Given $k = (\tilde{k},z) \in \LL^{*}$ and $\alpha \in \widetilde{J} $, it is convenient to define $J^{\alpha}_{k}$, the subset of $\holoL $ given by:
\[J^{\alpha}_{k} = \lbrace \delta \in \holoL ; \ P(\delta(\tilde{k},z))=\alpha \tilde{k}\rbrace.\]
Denote the union of the sets $J^{\alpha}_{k}$ by
\[S_{k}  = \bigcup_{\alpha \in \widetilde{J} }J^{\alpha}_{k}.\]
Then the projection of $S_k\cdot k$ satisfies $P(S_{k}\cdot k) = \widetilde{J} \cdot P(k)$.
%$P(S_{k}k) \subseteq \widetilde{J} P(k)$.	
We show in Theorem \ref{sepa} that $\holoL k$ is decomposed into orbits given by   cosets $\delta S_{k}$ and that  $P(\delta S_{k}\cdot  k) = \widetilde{J}\cdot  P(\delta k)$.	
The main problem is that $S_k$ is not necessarily a group.
We start with some properties of the sets we have defined.
From now on for $v=(x,y)\in\RR^{n+1}$, $x\in\RR^n$, $y\in\RR$ we use the notation $v_{|2}=y$.

\begin{proposition}\label{caracterizacao}	
	The following properties hold for $k = (\tilde{k},z) \in \LL^{*}$:
\begin{enumerate}
\renewcommand{\theenumi}{\roman{enumi}}
\renewcommand{\labelenumi}{({\theenumi})}
\item\label{p61}
 If $\delta \in J^{Id_{n}}_{k}$, then the last coordinate, $\delta  k_{|2}$, of $\delta k$ is equal to  $\pm z$. 

\item\label{p62}
  For $\alpha \in \widetilde{J} $  then $J^{\alpha}_{k}=\gamma \cdot J^{Id_{n}}_{k}$
where $\gamma\in\holoL$ is either $\gamma = \alpha_{+}$ or $\gamma=\alpha_{-}$. 

\item\label{p63}
  Let $\Sigma_{k}$ be the isotropy subgroup of $k$ in $\holoL $. Then either $J^{Id_{n}}_{k} = \Sigma_{k}$ or it is the disjoint union $\Sigma_{k}\cupdot \beta_{-}\Sigma_{k}$, for some $\beta_{-} \in J^{Id_{n}}_{k}$.

\item\label{p64}
  The set $S_{k}$ satisfies $S_{k} = \displaystyle\bigcup_{\alpha \in \widetilde{J} }J^{\alpha}_{k} = \Jup   J^{Id_{n}}_{k}$. 
  Moreover, $S_{k}\cdot k = \Jup\cdot  k$.
  
\item\label{p65} If $J_k^{Id_n}$ is a subgroup of $\holoL$ and if $\Jup   J^{Id_{n}}_{k} = J^{Id_{n}}_{k}\Jup $, then
  $S_{k}$ is a subgroup of $\holoL $.

%\item\label{p65}
%  $S_{k}$ is a subgroup of $\holoL $ if and only if $\Jup   J^{Id_{n}}_{k} = J^{Id_{n}}_{k}\Jup  $.
\end{enumerate}	
\end{proposition}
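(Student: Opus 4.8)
The plan is to prove the five items in order, using throughout that every element of $\holoL$ is orthogonal and that the lifts $\alpha_{+},\alpha_{-}$ of $\alpha\in O(n)$ are block-diagonal with upper block $\alpha$, so that $P(\alpha_{\pm}v)=\alpha P(v)$ and $P(\alpha_{\pm}^{-1}v)=\alpha^{-1}P(v)$ for every $v\in\RR^{n+1}$. Item~(\ref{p61}) is just a norm comparison: $\delta\in J^{Id_{n}}_{k}$ means $\delta k=(\tilde k,w)$ with $w=\delta k_{|2}$, and $|\tilde k|^{2}+w^{2}=|\delta k|^{2}=|k|^{2}=|\tilde k|^{2}+z^{2}$ forces $w=\pm z$. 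For item~(\ref{p62}), since $\alpha\in\widetilde J$ at least one of $\alpha_{+},\alpha_{-}$ lies in $\holoL$; fixing such a $\gamma$, the displayed block identity gives $\gamma\delta\in J^{\alpha}_{k}$ whenever $\delta\in J^{Id_{n}}_{k}$ and $\gamma^{-1}\eta\in J^{Id_{n}}_{k}$ whenever $\eta\in J^{\alpha}_{k}$, hence $J^{\alpha}_{k}=\gamma J^{Id_{n}}_{k}$; the same computation shows any block-diagonal lift of $\alpha$ contained in $\holoL$ can serve as $\gamma$.

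For item~(\ref{p63}) I would note $\Sigma_{k}\subseteq J^{Id_{n}}_{k}$ (a fixed vector of $k$ has $P(\,\cdot\,)=\tilde k$) and first dispose of $z=0$, where~(\ref{p61}) forces every element of $J^{Id_{n}}_{k}$ to fix $k$, so $J^{Id_{n}}_{k}=\Sigma_{k}$. If $z\neq0$ and the inclusion is strict, pick $\beta_{-}\in J^{Id_{n}}_{k}\setminus\Sigma_{k}$; by~(\ref{p61}) and $\beta_{-}k\neq k$ we must have $\beta_{-}k=(\tilde k,-z)$. Then $\beta_{-}\Sigma_{k}\subseteq J^{Id_{n}}_{k}$, and~(\ref{p61}) shows any $\delta\in J^{Id_{n}}_{k}$ sends $k$ to $k$ or to $\beta_{-}k$, hence lies in $\Sigma_{k}$ or $\beta_{-}\Sigma_{k}$; these are disjoint because a common element would give $k=\beta_{-}k$, i.e.\ $z=0$.

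Item~(\ref{p64}) has two halves. For $S_{k}=\Jup J^{Id_{n}}_{k}$: the inclusion $\Jup J^{Id_{n}}_{k}\subseteq S_{k}$ is one line — if $\gamma\in\Jup$ has block $\alpha\in\widetilde J$ and $\delta\in J^{Id_{n}}_{k}$ then~(\ref{p61}) gives $\delta k=(\tilde k,\pm z)$, so $P(\gamma\delta k)=\alpha\tilde k\in\widetilde J\cdot\tilde k$ and $\gamma\delta\in S_{k}$ — while the reverse inclusion is~(\ref{p62}) applied to each $J^{\alpha}_{k}$ in $S_{k}=\bigcup_{\alpha}J^{\alpha}_{k}$. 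For $S_{k}\cdot k=\Jup\cdot k$: the inclusion $\supseteq$ is free from $\Jup\subseteq S_{k}$, and for $\subseteq$ I would write $S_{k}\cdot k=\Jup\,(J^{Id_{n}}_{k}\cdot k)$ and apply~(\ref{p63}): when $J^{Id_{n}}_{k}=\Sigma_{k}$ this is $\Jup\cdot k$ already, and when $J^{Id_{n}}_{k}=\Sigma_{k}\cupdot\beta_{-}\Sigma_{k}$ it equals $\Jup\cdot k\cup\Jup\cdot(\tilde k,-z)$, so it is enough to show $(\tilde k,-z)\in\Jup\cdot k$: then, $\Jup$ being a subgroup of $\holoL$ (see the next item) and $\Jup\cdot k$ one of its orbits, $\Jup\cdot(\tilde k,-z)=\Jup\cdot k$. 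I expect \emph{this} to be the main obstacle: it says that the $\beta_{-}$ of~(\ref{p63}), which need not preserve the splitting $\RR^{n+1}=\RR^{n}\oplus\RR$, may be replaced by a block-diagonal element of $\holoL$ with the same action on $k$ and with block in $\widetilde J$, and I would argue this from the structure of $\holoL$ with respect to that splitting (using, when $(0,y_{0})\in\LL$, the $\alpha_{-}$-lifts of $\widetilde J$ into $\holoL$).

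Finally, for item~(\ref{p65}): $\Jup$ is exactly the set of block-diagonal elements of $\holoL$ whose upper block lies in $\widetilde J$; since the block-diagonal elements of $O(n+1)$ form a subgroup isomorphic to $O(n)\times O(1)$, on which ``upper block'' is a homomorphism, and $\widetilde J\le O(n)$, the set $\Jup$ is a subgroup of $\holoL$. By~(\ref{p64}) $S_{k}=\Jup J^{Id_{n}}_{k}$, a product of two subgroups — $\Jup$, just noted, and $J^{Id_{n}}_{k}$, assumed — which permute by hypothesis; the product of two permutable subgroups of a group is a subgroup, so $S_{k}\le\holoL$.
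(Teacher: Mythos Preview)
Your arguments for items (\ref{p61}), (\ref{p62}), (\ref{p63}), (\ref{p65}), and for the first identity $S_{k}=\Jup J^{Id_{n}}_{k}$ in item (\ref{p64}), are correct and follow exactly the line of the paper's proof --- indeed you supply considerably more detail than the paper, which proves only (\ref{p61}) and (\ref{p62}) explicitly, then declares (\ref{p63}) and (\ref{p65}) ``immediate'' without further comment, and never addresses (\ref{p64}) in the proof at all.

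The one place where your proposal is incomplete is the second assertion of (\ref{p64}), namely $S_{k}\cdot k=\Jup\cdot k$, and you are right to flag it as the main obstacle. Your reduction is accurate: using (\ref{p63}), the claim comes down to showing that whenever $J^{Id_{n}}_{k}\neq\Sigma_{k}$ (so some $\delta_{0}\in\holoL$ sends $k$ to $(\tilde k,-z)$ with $z\neq0$) one has $(\tilde k,-z)\in\Jup\cdot k$, i.e.\ that $\delta_{0}$ may be replaced by a \emph{block-diagonal} element lying in $\Jup$. This holds immediately when $(Id_{n})_{-}\in\holoL$, since then $(Id_{n})_{-}\in\Jup$ and $(Id_{n})_{-}k=(\tilde k,-z)$; your parenthetical hint about using $\alpha_{-}$-lifts points in this direction but does not close the argument in general, because the existence of a non-block-diagonal $\delta_{0}$ with $\delta_{0}k=(\tilde k,-z)$ does not by itself force $(Id_{n})_{-}$ --- or any $\beta_{-}$ with $\beta\in\widetilde J$ and $\beta\tilde k=\tilde k$ --- to lie in $\holoL$. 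The paper's own proof offers no help here, since it simply omits item (\ref{p64}); so you have in fact gone further than the paper in isolating precisely what remains to be checked.
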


\begin{figure}
\includegraphics[scale=0.5]{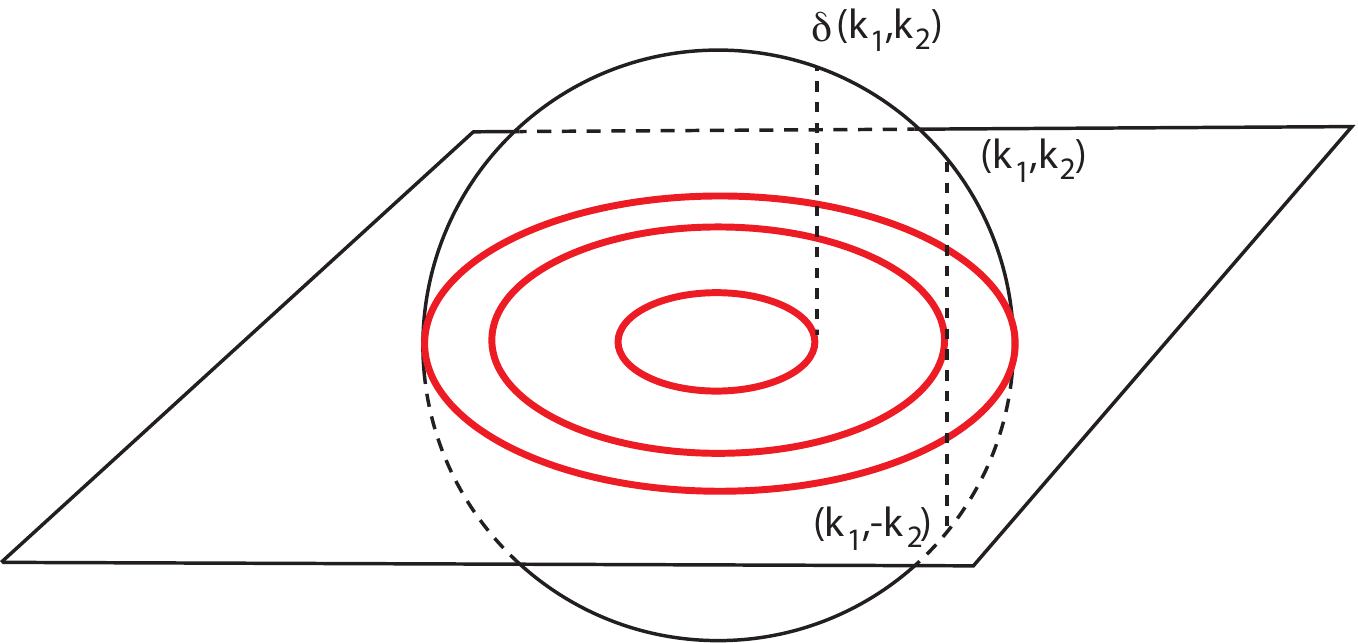}
\caption{If $P(\delta (k_1,k_2))=k_1$, then  $\delta (k_1,k_2)_{|_2}=\pm k_2$. 
Conversely, if $\delta (k_1,k_2)_{|_2}\ne \pm k_2$ then $k_1$ and $P(\delta (k_1,k_2)$ do not have the same norm and thus cannot be in the same $\widetilde{J}$-orbit.}
\label{figJorbitas}
\end{figure}

\begin{proof}
	 For $\delta \in J^{Id_{n}}_{k}$, we write $\delta k = (\tilde{k},(\delta k)_{|2})$.
Then by orthogonality of $\holoL $: 
\[\Vert (\tilde{k},z)\Vert = \Vert\delta k\Vert = \Vert (\tilde{k},\delta(\tilde{k},z)_{|2}\Vert\]	 
Therefore $\vert(\delta k)_{|2}\vert = \vert z\vert$. This proves item~\eqref{p61}.
See Figure~\ref{figJorbitas} for an illustration of this property.

	To prove item~\eqref{p62}, let $\gamma = \alpha_{+}$ or $\gamma = \alpha_{-}$, depending on whether either $\alpha_{+}$ or $\alpha_{-}$ is in $\holoL $. 
	Let $\phi:  J^{Id_{n}}_{k}\rightarrow J^{\alpha}_{k}$ given by $ \delta\mapsto \phi(\delta) = \gamma\delta $.
We show that the map $\phi$ is injective and onto.
	
	In fact, if $\phi(\delta_{1}) = \phi(\delta_{2})$, for some $\delta_{1}, \ \delta_{2} \in J^{Id_{n}}_{k}$, then $\gamma\delta_{1} = \gamma\delta_{2}$, for $\gamma = \alpha_{+}$ or $\gamma = \alpha_{-}$, implying that $\delta_{1} = \delta_{2}$. Thus $\phi$ is injective.
	
	Now consider $\rho \in J^{\alpha}_{k}$, then there exist $\alpha_{\pm}^{-1}\rho \in \holoL $ and $\tilde{k}_{2}$, such that 
\[\alpha_{\pm}^{-1}\rho(\tilde{k},z) = \alpha_{\pm}^{-1}(\alpha\tilde{k},\tilde{k}_{2}) = (\tilde{k},\pm\tilde{k}_{2})\]		
that is, $\alpha_{\pm}^{-1}\rho \in J^{Id_{n}}_{\tilde{k}}$ and $\phi(\alpha_{\pm}^{-1}\rho) = \rho$.  	
	
	The items~\eqref{p63} and \eqref{p65} are now immediate.
\end{proof}
Note that  item~\eqref{p63} implies that,  in general, if $J_k^{Id_n}\ne\Sigma_k$ then $J_k^{Id_n}$ is not a group, as the next example will show. 
%If   $\Jup   J^{Id_{n}}_{k} \neq J^{Id_{n}}_{k}\Jup  $ then $S_{k}$ is also not a subgroup of $\holoL $.
	
\begin{example}\label{ex1}
Let $\LL_{1}$ be the simple 3-dimensional  cubic lattice  generated by  $(1,0,0), \ (0,1,0), \ (0,0,1)$, with dual lattice $\LL_{1}^{*}$ generated by the same vectors.

The holohedry $H_{\LL_{1}}$ has 24 rotations, they are generated by: the identity $Id_{3}$; three rotations of order 4: $R_{x}$: rotation about $(1,0,0)$; $R_{y}$: rotation about $(0,1,0)$ and $R_{z}$: rotation about $(0,0,1)$. The remaining rotations are denoted by $R_{v}$  as a rotation about the axis $v$: $R_{(1,1,1)}$, $R_{(1,-1,-1)}$, $R_{(1,-1,1)}$, $R_{(1,1,1)}$ of order 3, and  $R_{(1,0,1)}$,  $R_{(1,0,-1)}$, $R_{(1,1,0)}$, $R_{(1,-1,0)}$, $R_{(0,1,1)}$  and $R_{(0,1,-1)}$of order 2.

The remaining elements of $H_{\LL_{1}}$ can be obtained by multiplying these matrices by $-Id_{3}$.
%\hfill {\Large$\bullet$}
%\end{example}

\begin{table}[hhh]
\caption{Relation of the set $S_{k}$ for all $k\in\LL_1^*\setminus\{(0,0,0)\}$ and the group $\holoL$ for Example \ref{ex1}. 
Note that 
$\vert S_{(a,b,b)}\vert = 32$,
 hence  $S_{(a,b,b)}$ is not a subgroup of $\holoL $, since $\vert\holoL\vert=48$. } 
\label{table:S_set}
\begin{center}
\begin{tabular}{ l|l|l|l}
%\hline
$k$ with $abc\neq 0$& $\Sigma_{k}$& $J^{Id_2}_{k}$ &$S_{k}$  \\ 
$a \neq b \neq c\neq a$&&\\
\hline \hline
&&&\\
%$a \neq 0$ and $b = c=0$&
$(a,0,0)$&
$\Sigma_{(a,0,0)}=J^{Id_{2}}_{(a,0,0)}$
& $\begin{aligned} J^{Id_{2}}_{(a,0,0)} & = \lbrace Id_{3}, R_{x}, R_{x}^{2}, R_{x}^{3}, \\
                                       &-R_{y}^{2},-R_z^2, -R_{(0,1,1)},-R_{(0,1,-1)}\rbrace
\end{aligned}$
& $\vert S_{(a,0,0)}\vert = 32$
%, then  $S_{(a,0,0)}\nleq \holoL $ 
\\
&&\\
\hline
&&\\
$(a,b,b)$&
%$0\neq a \neq b = c\neq 0$&
$\Sigma_{(a,b,b)}=\lbrace Id_3, -R_{(0,1,-1)}\rbrace$
& $ J^{Id_{2}}_{(a,b,b)}  = \lbrace  Id_3,-R_z^2,R_x^3, -R_{(0,1,-1)}\rbrace$
& $\vert S_{(a,b,b)}\vert = 32$
%, then  $S_{(a,0,0)}\nleq \holoL $ 
\\
&&\\
\hline
&&\\
$(a,a,b)$&
%$0\neq a=b \neq c\neq 0$&
$\Sigma_{(a,a,b)}=\lbrace Id_3, -R_{(1,-1,0)}\rbrace$
& $ J^{Id_{2}}_{(a,a,b)}  = \lbrace  Id_3,-R_z^2, R_{(1,1,0)},-R_{(1,-1,0)}\rbrace$
& $S_{(a,a,b)}=\Jup $
%, then  $S_{(a,0,0)}\nleq \holoL $ 
\\
&&\\
\hline
&&\\
$(a,a,0)$&
%$a = b \neq 0$ and $c=0$&
$\Sigma_{(a,a,0)}=J^{Id_{2}}_{(a,a,0)}$
&  $J^{Id_{2}}_{(a,a,0)} = \lbrace Id_{3}, -R_z^2, R_{(1,1,0)},-R_{(1,-1,0)}\rbrace $
& $S_{(a,a,0)} = \Jup $ \\ 
&&\\
\hline  
&&\\
$(a,b,0)$&
%$\begin{aligned}a \neq 0 \ne& b\quad &\\
%\mbox{ and }\  a \neq& b\end{aligned}$&
$\Sigma_{(a,b,0)}=J^{Id_{2}}_{(a,b,0)}$
& $J^{Id_{2}}_{(a,b,0)} = \lbrace Id_{3},  -R_z^2\rbrace$
& $S_{(a,b,0)} = \Jup $\\
&&\\
\hline  
&&\\
$(a,b,c)$&
%$\begin{aligned}a \neq b \neq c\neq a\\ \mbox{and }\  abc \neq &0\end{aligned}$&
$\Sigma_{(a,b,c)}= \lbrace Id_{3}\rbrace$
& $J^{Id_{2}}_{(a,b,c)} = \lbrace Id_{3},  -R_z^2\rbrace$
& $S_{(a,b,c)} = \Jup $\\
&&\\
\hline  
&&\\
$(a,a,a)$&
%$a=b = c\neq 0$&
$\begin{aligned}\Sigma_{(a,a,a)}&= \lbrace Id_{3}, R_{(1,1,1)},\\
				& R_{(1,1,1)}^{2},  -R_{(1,0,-1)},\\
				& -R_{(0,1,-1)},-R_{(1,-1,0)}\rbrace
\end{aligned}$
& $\begin{aligned} J^{Id_{2}}_{(a,a,a)} & = \Sigma_{(a,a,a)}\cup \lbrace  R_{x}^3, R_{y}, -R_{z}^{2}, \\
                                       &-R_{(1,-1,-1)}, -R_{(1,-1,1)}^2,-R_{(1,1,0)}\rbrace
\end{aligned}$
& $S_{(a,a,a)}=H_{\LL_1}$
 \end{tabular}  
\end{center}    
\end{table}

For any $y_0$, the projected lattice $\Pi_{y_{0}}(\LL_1) = \widetilde{\LL}_1$ is generated by the vectors $(1,0), \ (0,1)$, its dual, $\widetilde{\LL}_1^{*}$ is generated by the same vectors.

The subgroup $\Jup $ has order 16 and is given by
$\{\pm Id_3,\pm R_x^2, \pm R_y^2, \pm R_z,\pm R_z^2,\pm R_z^3, \pm R_{(1,1,0)},\pm R_{(1,-1,0)}\}$.

The orbit of an element $(a,b,c) \in \LL_1^*$ by $H_{\LL_1}$ is given by:
\begin{equation}\label{J(a,b,0)}
\lbrace(\pm a,\pm b,\pm c), (\pm b, \pm a, \pm c)\rbrace 	
  			   \cup \lbrace (\pm a, \pm c, \pm b), (\pm c, \pm a, \pm b)\rbrace 
  			   \cup \lbrace (\pm b, \pm c, \pm a), (\pm c, \pm b, \pm a)\rbrace
\end{equation}
and  the projection $P(H_{\LL_1}(a,b,c))$ is given by:
$$
\lbrace (\pm a,\pm b), (\pm b,\pm a)\rbrace 
			 \cup \lbrace (\pm a,\pm c), \ (\pm c,\pm a)\rbrace 
			 \cup \lbrace (\pm c,\pm b), \ (\pm b,\pm c)\rbrace
$$

On the other hand, for any $y_0\in\RR$, the group $\widetilde{J} =\Pi_{y_0}(H_{\LL_1})$ is the dihedral group of symmetries of the square, $D_{4}$,  generated by:
$$
\gamma = \left(\begin{array}{cc} 
0 & 1  \\ 
-1 & 0 \end{array}\right),
\qquad
\kappa = \left(\begin{array}{cc} 
0 & 1 \\ 
1 & 0 
\end{array}\right) .
$$
For every $\alpha\in\widetilde{J}$ both $\alpha_+$ and $\alpha_-\in\holoL$.
 The orbit $\widetilde{J} (a,b)$ is $\lbrace (\pm a,\pm b), \ (\pm b,\pm a)\rbrace$.

%The subgroup $\Jup $ is given by $\lbrace \pm Id_{3}, \ \pm R_{x}^{2}, \ \pm R_{y}^{2}, \ \pm R_{z}, \ \pm R_{z}^{2}, \ \pm R_{z}^{3}, \ \pm R_{(1,1,0)}, \ \pm R_{(1,-1,0)}\rbrace$.  
%As we can see, $\Jup  = \Jup  $.

%Note that, with the exception of $k=(a,b,b)$, for all other $k \in \LL_1^{*}$, we have  $J^{Id_{2}}_{k} = \Sigma_{k}$. Moreover, by Proposition \ref{caracterizacao} item 2, for each $\alpha \in \widetilde{J} $, the set $J^{\alpha}_{k} = \alpha_{\pm} J^{I_{2}}_{k}$, depending on whether either $\alpha_{+}$ or $\alpha_{-}$ is in $\holoL $.

In Table \ref{table:S_set} we describe, for each $k \in \LL_1^{*}\setminus \{(0,0,0)\}$, the subgroup $\Sigma_k$ and the set $J^{Id_{2}}_{k}$ as well as the way  the set $S_{k}$ and the group $\holoL $ are related. 
Some special cases are worth mentioning.
If $b\neq 0 \neq a$ then $J_{(a,b,b)}^{Id_2}$ is not a group, because it contains the order 4 element $R_x^3$ but not its powers.
In the other cases where $J_k^{Id_2}\neq \Sigma_k$ we have that  $\beta_-$ has  order 2 and $J_k^{Id_2}$ is a group.

The only cases where $S_k\ne \Jup $ are for $k=(a,a,a)$, where $S_k$ is the whole group $H_{\LL_1}$ and for $k=(a,b,b)$, $0\ne a\ne b$.
For the second case, note that $\delta\in\holoL$ is not in $S_{(a,b,b)}$ if $\delta(a,b,b)=(\pm b, \pm b,\pm a)$.
There are 8  rotations with this property: $R_y$ and $R^3_y$; the order 2 rotations $R_{(1,0,1)}$ and $R_{(1,0,-1)}$; and one and only one rotation of order 3 around each of the four axes of order 3.
Multiplying them by $-Id_{3}$ yelds 16 elements not in $S_{(a,b,b)}$, hence $\vert S_{(a,b,b)}\vert=32$ and thus $S_{(a,b,b)}$ is not a subgroup of $H_{\LL_1}$.
% $\holoL$.
 \hfill {\Large$\bullet$}
\end{example}

\begin{example}\label{ex2}
Consider now the lattice $\LL_2$ that has generators $ \left(1,0,0\right), \left(\frac{1}{2}, \frac{\sqrt{3}}{2},0\right), \left(\frac{1}{2},\frac{\sqrt{3}}{6},\frac{\sqrt{6}}{6}\right)$ given by $\LL_{2} =\frac{1}{\sqrt{2}} A\LL_{1}$ where  $A$ is:
%Consider now a crystallographic group $\Gamma_{2}$ obtained from $\Gamma_{1}$, given in the previous example, by a change of coordinate given by the matrix
$$
A =
\left(\begin{array}{ccc} 
\frac{1}{\sqrt{2}} & -\frac{1}{\sqrt{2}} & 0 \\ 
\frac{1}{\sqrt{6}} & \frac{1}{\sqrt{6}} & \frac{2}{\sqrt{6}} \\
\frac{1}{\sqrt{3}} & \frac{1}{\sqrt{3}} & -\frac{1}{\sqrt{3}}\end{array}\right).
$$
The lattice  $\LL_2$ has holohedry $H_{\LL_{2}} = AH_{\LL_{1}}A^{-1}$.  

If $y_0\ne n\sqrt{6}/2$ with $n\in\ZZ\setminus\{0\}$, then  $(0,0,y_0)\not\in\LL_2$.
In this case $\widetilde{\LL}_2$ is generated by $(1,0)$ and $(\frac{1}{2},\frac{\sqrt{3}}{2})$
and $\widetilde{J}$ is isomorphic to the dihedral group $D_3$. 
However, the holohedry of $\widetilde{\LL}_2$ is larger than $\widetilde{J}$.
It contains rotations of order 6 and is isomorphic to $D_6$.

For $y_0= n\sqrt{6}/2$ with $n\in\ZZ\setminus\{0\}$, then $\widetilde{\LL}_2$ is generated by $(1,0)$ and $(\frac{1}{2},\frac{\sqrt{3}}{6})$
and $\widetilde{J}$ is isomorphic to the dihedral group $D_6$, coinciding with the holohedry of $\widetilde{\LL}_2$.

The dual lattice $\LL_2^*=\sqrt{2} A\LL_1^*$ is generated by $(0,0,\sqrt{6})$, $(1,-\frac{\sqrt{3}}{3},-\frac{\sqrt{6}}{3})$, and 
$(0,\frac{2\sqrt{3}}{3},-\frac{\sqrt{6}}{3})$.

For $k= (2,0,0) \in \LL_{2}^{*}$, the set $J_{(2,0,0)}^{Id_2}=\Sigma_{(2,0,0)}$ is a subgroup of $H_{\LL_2}$.
%contains, besides the identity, the following elements of order 2, forming a subgroup of $H_{\LL_2}$:
%$$
% AR_{(1,-1,0)}A^{-1}=
%\left(\begin{array}{ccc} 
%1&0&0\\
%0&-1&0\\
%0&0&-1
%\end{array}\right)
%\qquad
%-AR_z^2A^{-1}=
%\left(\begin{array}{ccc} 
%1&0&0\\
%0&\frac{1}{3}&\frac{2\sqrt{2}}{3}\\
%0&\frac{2\sqrt{2}}{3}&\frac{1}{3}
%\end{array}\right)
%\qquad 
%-AR_{(1,1,0)}A^{-1}=
%\left(\begin{array}{ccc} 
%1&0&0\\
%0&\frac{1}{3}&-\frac{2\sqrt{2}}{3}\\
%0&\frac{-2\sqrt{2}}{3}&-\frac{1}{3}
%\end{array}\right).
%$$
The $\widetilde{J}$-orbit of $P(2,0,0)=(2,0)$ contains 6 elements, independently of $y_0$, and is
$$
\widetilde{J}\cdot (2,0)=\{ (\pm 2,0), (\pm 1,\pm \sqrt{3})\}.
$$
We claim that  $S_{(2,0,0)}$ is not a group.
To see this we compute
$$
AR_yA^{-1}=
\left(\begin{array}{ccc} 
\frac{1}{2}&\frac{\sqrt{3}}{6}&-\frac{2\sqrt{3}}{3}\\
-\frac{\sqrt{3}}{2}&\frac{1}{6}&-\frac{\sqrt{2}}{3}\\
0&\frac{2\sqrt{2}}{3}&\frac{1}{3}
\end{array}\right)
\qquad 
AR_y^2A^{-1}=
\left(\begin{array}{ccc} 
0&-\frac{\sqrt{3}}{3}&-\frac{\sqrt{6}}{3}\\
-\frac{\sqrt{3}}{3}&-\frac{2}{3}&\frac{\sqrt{2}}{3}\\
-\frac{\sqrt{6}}{3}&\frac{\sqrt{2}}{3}&-\frac{1}{3}
\end{array}\right)
\qquad 
AR_y^3A^{-1}=
\left(\begin{array}{ccc} 
\frac{1}{2}&-\frac{\sqrt{3}}{2}&0\\
\frac{\sqrt{3}}{6}&\frac{1}{6}&\frac{2\sqrt{2}}{3}\\
-\frac{\sqrt{6}}{3}&-\frac{\sqrt{2}}{3}&\frac{1}{3}
\end{array}\right) .
$$
Then $AR_{y}A^{-1} \in S_{(2,0,0)}$, since $P(AR_{y}A^{-1}(2,0,0))=(1,-\sqrt{3})$ and yet the powers of $AR_{y}A^{-1} $ are not in $S_{(2,0,0)}$,
establishing the claim.
\hfill {\Large$\bullet$}
%By doing some calculations, we can see that for the vector $(\tilde{k}, z) = (2,0,0) \in \LL_{2}^{*}$,
%the product $S_{(2,0,0)} = \Jup   J^{Id_{n}}_{(2,0,0)}$ is not a subgroup of $H_{\LL_2}$, since the element $AR_{x}A^{-1} \in S_{(2,0,0)}$ does not have an inverse in $S_{(2,0,0)}$.
\end{example}
\bigbreak

	 We can now prove the main result of this section. See Figure~\ref{figJorbitas}.
	
\begin{theorem}\label{sepa}
	Consider an $(n+1)$-dimensional lattice $\LL$ with holohedry $\holoL $, and let $\widetilde{J}  = \Pi_{y_{0}}(\holoL)$. Let $k  \in \LL^{*}$.
%	$k = (\tilde{k},z) \in \LL^{*}$. 
%	Given $\delta \in \holoL $, the $\widetilde{J} $-orbit of $P(\delta k)$ is $P(S_{\delta k}\delta k)$.  
Then there are $\delta_j \in \holoL$, $j-2, \hdots, r$ yielding $k_1=k,k_2=\delta_2k, \ldots, k_r=\delta_rk$ such that 
$\holoL\cdot k$ is the disjoint union
$$
\holoL\cdot k=(S_{k_1}\cdot k_1)\cup (S_{k_2}\cdot k_2)\cup\ldots\cup (S_{k_r}\cdot k_r)
$$
and therefore the projection $P(\holoL k)$ is a disjoint union of $\widetilde{J} $-orbits.
	%, all of which have the same number of elements, counted with multiplicity.
\end{theorem}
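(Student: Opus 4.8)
The plan is to decompose $\holoL$ into right cosets of the set $S_k$ and show that each coset, when applied to $k$, produces a piece of $\holoL\cdot k$ that projects onto a single $\widetilde{J}$-orbit. First I would recall from Proposition~\ref{caracterizacao}\eqref{p64} that $S_k\cdot k = \Jup\cdot k$ and from \eqref{J_til_suspendido} that for any $\delta\in\holoL$ the projection $P(\Jup\cdot(\delta k))$ lies in $\widetilde{J}\cdot P(\delta k)$; in fact, since every $\alpha\in\widetilde{J}$ lifts to some $\alpha_\pm\in\Jup$, we get $P(S_{\delta k}\cdot(\delta k)) = P(\Jup\cdot(\delta k)) = \widetilde{J}\cdot P(\delta k)$ exactly (this is the identity stated just before the theorem, applied at $\delta k$ in place of $k$). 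The key observation is that $S_{\delta k}\cdot(\delta k) = \delta S_k\cdot k$ is simply the orbit piece ``attached'' to the point $\delta k$; so the sets of the form $S_{\delta k}\cdot(\delta k)$, as $\delta$ ranges over $\holoL$, cover $\holoL\cdot k$, and two such pieces are either equal or disjoint because they are (intersections with $\holoL\cdot k$ of) distinct level sets of the norm-preserving map sending a point to its $\widetilde{J}$-orbit-of-projection, or more directly because $S_{\delta k}\cdot (\delta k)$ is an equivalence class under ``same $\widetilde{J}$-orbit after projection, within $\holoL\cdot k$.''

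Concretely, the steps I would carry out are: (1) Define an equivalence relation on $\holoL\cdot k$ by $u\sim v$ iff $P(u)$ and $P(v)$ lie in the same $\widetilde{J}$-orbit in $\widetilde{\LL}^*$; this is manifestly an equivalence relation. (2) Show that the equivalence class of a point $u=\delta k$ is exactly $S_u\cdot u$: the inclusion $S_u\cdot u\subseteq [u]$ follows since $P(S_u\cdot u)\subseteq\widetilde{J}\cdot P(u)$; the reverse inclusion follows because if $v=\delta'k\in\holoL\cdot k$ has $P(v)=\alpha P(u)$ for some $\alpha\in\widetilde{J}$, then $\delta'\delta^{-1}$ — or rather, using that $\delta' k$ and $\alpha_\pm \delta k$ have the same projection and the same norm, hence the same last coordinate up to sign by Proposition~\ref{caracterizacao}\eqref{p61} — witnesses that $\delta'\in J^{\alpha}_{\delta k}\cdot(\text{something})$, placing $v$ in $S_u\cdot u$. (3) Conclude that $\holoL\cdot k$ is the disjoint union of these finitely many classes; pick representatives $k_1=k, k_2=\delta_2 k,\ldots,k_r=\delta_r k$, one per class, giving $\holoL\cdot k=\bigcup_j S_{k_j}\cdot k_j$ disjointly. (4) Finally, since $P(S_{k_j}\cdot k_j)=\widetilde{J}\cdot P(k_j)$ by the identity before the theorem, the image $P(\holoL\cdot k)=\bigcup_j \widetilde{J}\cdot P(k_j)$, and these $\widetilde{J}$-orbits are pairwise disjoint because the classes were — two points whose projections share a $\widetilde{J}$-orbit are in the same class.

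The main obstacle I anticipate is step (2), the reverse inclusion $[u]\subseteq S_u\cdot u$: one must be careful that knowing $P(\delta' k)$ lies in the $\widetilde{J}$-orbit of $P(\delta k)$ is genuinely enough to force $\delta' k\in S_{\delta k}\cdot(\delta k)$, rather than merely into $\holoL\cdot k$ with the right projection. The point is that $\delta' k$ and $\delta k$ have equal norm (both lie in $\holoL\cdot k$), and their projections are related by $\alpha\in\widetilde{J}\subseteq O(n)$, hence have equal norm too; therefore the last coordinates of $\delta' k$ and $\delta k$ agree up to sign. Writing $\delta' k = (\alpha\tilde{k}',\pm z')$ where $\delta k=(\tilde{k}',z')$ (after relabelling), one then checks $\alpha_{\pm}^{-1}\delta'$ fixes the projection of $\delta k$ and is in $\holoL$, i.e.\ lies in $J^{Id_n}_{\delta k}$, so $\delta'\in\alpha_\pm J^{Id_n}_{\delta k}\subseteq \Jup\, J^{Id_n}_{\delta k}=S_{\delta k}$ by Proposition~\ref{caracterizacao}\eqref{p64}, and hence $\delta' k\in S_{\delta k}\cdot(\delta k)=S_u\cdot u$. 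Once this is pinned down, disjointness and the statement about $P(\holoL k)$ are immediate, and the finiteness of $r$ comes from finiteness of $\holoL$.
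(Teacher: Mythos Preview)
Your equivalence-relation packaging is correct and amounts to the same partition the paper constructs: both identify the pieces $S_{k_j}\cdot k_j$ as the preimages in $\holoL\cdot k$ of the distinct $\widetilde{J}$-orbits under $P$. The paper proceeds iteratively --- it first proves the closure property ``$\alpha_*\delta\in S_k\Rightarrow\delta\in S_k$'' and then peels off $S_{\delta_j k}\cdot(\delta_j k)$ one at a time --- whereas you observe directly that these pieces are the classes of $\sim$.

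Two small corrections. First, the aside $S_{\delta k}\cdot(\delta k)=\delta S_k\cdot k$ is false for general $\delta\in\holoL$ (an arbitrary $\delta$ need not intertwine with $P$), but fortunately you never use it in steps (1)--(4). Second, the ``main obstacle'' you anticipate is not one: the reverse inclusion $[u]\subseteq S_u\cdot u$ is immediate from the definition of $S_u$. Indeed, if $v\in\holoL\cdot k=\holoL\cdot u$ satisfies $P(v)\in\widetilde{J}\cdot P(u)$, write $v=\gamma u$ with $\gamma\in\holoL$; then $P(\gamma u)\in\widetilde{J}\cdot P(u)$ is precisely the statement $\gamma\in S_u$, so $v\in S_u\cdot u$. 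Your norm argument is therefore an unnecessary detour, and as written it contains a slip: to land in $J^{Id_n}_{\delta k}$ one must test the action on $\delta k$, so the relevant element is $\alpha_\pm^{-1}\delta'\delta^{-1}$, not $\alpha_\pm^{-1}\delta'$.
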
	
\begin{proof}
%	The first statement follows by definition.
Given $\alpha\in\widetilde{J}$ let $\alpha_*\in\Jup\subset\holoL$ stand for either $\alpha_+$ or $\alpha_-$, as appropriate. 
We claim that for any $\delta\in\holoL$, any $\alpha\in\widetilde{J}$ and $k\in\LL^*$, if $\alpha_*\delta\in S_k$ then $\delta \in S_k$.

If $\alpha_*\delta\in S_k$ then there exists $\beta\in\widetilde{J}$ such that $\alpha_*\delta k=\beta_* k$, by items \eqref{p61} and \eqref{p62} of Proposition~\ref{caracterizacao}. 
Hence, $\delta k=\alpha_*^{-1}\beta_* k$ and since $\alpha_*^{-1}\beta_*\in\Jup$ this implies that $\delta\in S_k$, as claimed.

	Now consider %$\holoL  = S_{k}\cupdot S_{k}^{c}$
	$\holoL  = S_{k}\cup S_{k}^{c}$ and $\delta_{2} \in S_{k}^{c}$, where $S_{k}^{c}$ is the complement of $S_{k}$. 
	Then for any $\alpha_*\in\Jup$ we have $\alpha_*\delta_2\in S_k^c$. 
	In particular, $\Jup\cdot (\delta_2 k)\cap S_k\cdot k=\emptyset$.
	On the other hand, by Proposition \ref{caracterizacao} item~\eqref{p64}, the set  $\Jup \cdot (\delta_2 k) = S_{\delta_2k}\cdot (\delta_2k) \subset \holoL k$. Thus, we can write
$$
 \holoL \cdot k = S_{k}\cdot k \cup S_{\delta_2k}\cdot (\delta_2k)\cup (S_{k}\cup S_{\delta_2k}\delta_{1})^{c}k.
 $$	  

	Since $\holoL $ is a finite group, we can repeat the process to obtain:
	\begin{equation}\label{J_orbit}
\holoL\cdot k  = S_{k}\cdot k \cup S_{\delta_2k}\cdot (\delta_2k) \cup \ldots \cup S_{\delta_{r}k}\cdot (\delta_{r}k).
\end{equation}
%\begin{equation}\label{J_orbit}
%\holoL\cdot k  = S_{k}\cdot k \cup S_{\delta_2k}\cdot (\delta_2k) \cup \ldots \cup S_{\delta_{r}k}\cdot (\delta_{r}k).
%\end{equation}
Then, for 
%$k = (\tilde{k},z)$, 
$u_1= P(k)$ and $= u_{i}P(\delta_{i}k) $, we have the disjoint union
\begin{equation}\label{wavelenths_1}
P(\holoL \cdot k) = \displaystyle \bigcup_{i = 1}^{r}\widetilde{J} \cdot u_{i}
\end{equation}	
and the result follows.
\end{proof}
We are interested in the case when there are several components  in \eqref{wavelenths_1}.
The next corollary provides a condition for this, in terms of the size of the different sets that are used in this section.

\begin{corollary}\label{moreProjectedOrbits}
For $k\in\LL^*$, the projection $P(\holoL\cdot k)$  in   \eqref{wavelenths_1} contains more than one $\widetilde{J}$-orbit if and only if
\begin{equation}\label{maisOrbitas}
 \frac{\vert \holoL \vert}{\vert\Jup\vert}>
 \frac{\vert\Sigma_{k}\vert}{\vert\Jup   \cap\Sigma_{k}\vert} .
\end{equation}
\end{corollary}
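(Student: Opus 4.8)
The plan is to count. From Theorem~\ref{sepa}, $\holoL\cdot k$ is the disjoint union of the sets $S_{\delta_j k}\cdot(\delta_j k)$, $j=1,\ldots,r$, and $P(\holoL\cdot k)$ is the corresponding disjoint union of the $\widetilde{J}$-orbits $\widetilde{J}\cdot u_j$. So the number $r$ of projected orbits is $>1$ if and only if a single set $S_k\cdot k$ does not exhaust $\holoL\cdot k$, i.e. $\vert S_k\cdot k\vert<\vert\holoL\cdot k\vert$. I would therefore compute both sides as indices. On the one hand, by the orbit--stabiliser theorem, $\vert\holoL\cdot k\vert=\vert\holoL\vert/\vert\Sigma_k\vert$. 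On the other hand, by Proposition~\ref{caracterizacao}\eqref{p64} we have $S_k\cdot k=\Jup\cdot k$, and the stabiliser of $k$ inside $\Jup$ is exactly $\Jup\cap\Sigma_k$, so $\vert S_k\cdot k\vert=\vert\Jup\vert/\vert\Jup\cap\Sigma_k\vert$. Hence $r>1$ iff
$$
\frac{\vert\Jup\vert}{\vert\Jup\cap\Sigma_k\vert}<\frac{\vert\holoL\vert}{\vert\Sigma_k\vert},
$$
which rearranges to \eqref{maisOrbitas}.

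The one point that genuinely needs an argument — and which I expect to be the main (minor) obstacle — is the ``only if'' direction: I must be sure that $P(\holoL\cdot k)$ is a \emph{single} $\widetilde{J}$-orbit precisely when $r=1$, and not merely that distinct $S_{\delta_j k}\cdot(\delta_j k)$ can still project onto the same $\widetilde{J}$-orbit. This is where I would lean on the construction in the proof of Theorem~\ref{sepa}: there $\delta_{j+1}$ is chosen in the complement of $S_k\cup S_{\delta_2k}\delta_2\cup\cdots$, and the claim established at the start of that proof (if $\alpha_*\delta\in S_{\delta_jk}$ then $\delta\in S_{\delta_jk}$) guarantees $\Jup\cdot(\delta_{j+1}k)$ is disjoint from each previously chosen $S_{\delta_i k}\cdot(\delta_i k)$; since projection of $\Jup$-related vectors gives $\widetilde{J}$-related vectors and conversely (Proposition~\ref{caracterizacao}\eqref{p61}–\eqref{p62}, cf.\ Figure~\ref{figJorbitas}), the blocks $S_{\delta_j k}\cdot(\delta_j k)$ are exactly the preimages under $P$ of the distinct $\widetilde{J}$-orbits. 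Thus the number of $\widetilde{J}$-orbits in $P(\holoL\cdot k)$ equals $r$, and $r=1$ is equivalent to $S_k\cdot k=\holoL\cdot k$, i.e.\ to equality (rather than strict inequality) in the displayed relation.

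Finally I would note, as a sanity check on the formula, that $\Jup\cap\Sigma_k\subset\Sigma_k$ and $\Jup\subset\holoL$ always, so both quotients in \eqref{maisOrbitas} are well-defined positive integers and the left side divides $\vert\holoL\vert/\vert\Sigma_k\vert$ from the orbit decomposition — consistent with $r$ being the ratio of the two sides. The remaining verifications ($\Jup\cap\Sigma_k$ is indeed the $\Jup$-stabiliser of $k$; the orbit of $k$ under $\Jup$ has the stated size) are routine applications of orbit--stabiliser and can be dispatched in a line each.
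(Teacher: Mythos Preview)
Your proposal is correct and follows essentially the same route as the paper: compute $\vert\holoL\cdot k\vert=\vert\holoL\vert/\vert\Sigma_k\vert$ and $\vert S_k\cdot k\vert=\vert\Jup\cdot k\vert=\vert\Jup\vert/\vert\Jup\cap\Sigma_k\vert$ via orbit--stabiliser, and observe that more than one $\widetilde{J}$-orbit appears iff these differ. The only difference is that your detour through Theorem~\ref{sepa} and the worry about distinct blocks projecting to the same $\widetilde{J}$-orbit are unnecessary: by the very definition of $S_k$, the set $S_k\cdot k$ is \emph{exactly} the preimage in $\holoL\cdot k$ of the single $\widetilde{J}$-orbit $\widetilde{J}\cdot P(k)$, so both directions follow at once from the cardinality comparison, which is how the paper argues.
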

\begin{proof}
	We know that the cardinal number of the orbit of $k$  by $\holoL $ is the index 
\begin{equation}\label{1.1}
\vert \holoL k\vert = \frac{\vert \holoL \vert}{\vert\Sigma_{k}\vert}.
\end{equation}  
The elements of $\holoL\cdot k$ that are projected into $\Jup\cdot P(k)$ are those in $ S_{k}\cdot k$, and this set coincides with $\Jup\cdot  k$ by item \eqref{p64} of Proposition~\ref{caracterizacao}.
Since the isotropy subgroup of $k$ in $\Jup  $ is $\Jup   \cap\Sigma_{k}$, then the number of different elements in 
$\holoL\cdot k$ whose projection lies in 
$\widetilde{J}\cdot P(k)$ is $\vert \Jup\vert/\vert \Jup   \cap\Sigma_{k}\vert$.
Therefore, there are points in $\holoL\cdot k$ whose projection does not lie in $\widetilde{J}\cdot P(k)$ if and only if
$$
 \frac{\vert \holoL \vert}{\vert\Sigma_{k}\vert}>
 \frac{\vert\Jup\vert}{\vert\Jup   \cap\Sigma_{k}\vert}
 $$
and this condition is equivalent to \eqref{maisOrbitas}.
\end{proof}

%The next corollary gives the number $r+1$ in  \eqref{wavelenths_1},  of different $\widetilde{J} $-orbits in $P(\holoL (\tilde{k},z))$, in terems of the order of the groups we use in this section.
%%	The number of different $\widetilde{J} $-orbits in $P(\holoL (\tilde{k},z))$ is given in the next lemma. The lemma gives a relation between the number $r$ in  \eqref{wavelenths_1} and the order of the groups we use in this section.
%
%\begin{corollary}\label{projected_ks}
%	Let $k = (\tilde{k},z) \in \LL^{*}$. Then, the number $r$ in \eqref{wavelenths_1} is 
%\[r \  =  \frac{\vert \holoL  \vert\cdot\vert\Jup   \cap\Sigma_{k}\vert}{\vert\Jup  \vert \cdot\vert \Sigma_{k}\vert} - 1.\]
%%where $\Sigma_{k} = \lbrace \delta \in J; \ \delta k = k\rbrace$ is the isotropy subgroup of $k$.
%\end{corollary}
%\begin{proof}
%	We know that the cardinal number of the orbit of $k$  by $\holoL $ is the index 
%\begin{equation}\label{1.1}
%\vert \holoL k\vert = \frac{\vert \holoL \vert}{\vert\Sigma_{k}\vert}.
%\end{equation}  
%	
%	By Theorem \ref{sepa}, 
%\begin{equation}\label{1.2}
%\vert \holoL k\vert = (r+1)\vert S_{k}k\vert.
%\end{equation}
%
%	Then combining \eqref{1.1} and \eqref{1.2}, we have
%\[r+1  = \frac{\vert \holoL \vert}{\vert S_{k}k\vert\cdot\vert\Sigma_{k}\vert}  =  \frac{\vert \holoL  \vert\cdot\vert\Jup   \cap\Sigma_{k}\vert}{\vert\Jup  \vert\cdot \vert \Sigma_{k}\vert} \]	
%where we are using the fact that $ S_{k}k = \Jup  k$ and that the isotropy subgroup of $k$ in $\Jup  $ is $\Jup   \cap\Sigma_{k}$.	
%\end{proof}

\section{A $\gamaL$-Irreducible decomposition}\label{secIrreducible}

	We are ready to give a decomposition of the projection of $\Gamma_{\LL}$-irreducible subspaces of $\XX_{\LL}$. 
	As before we use the notation $\widetilde{J} =\Pi_{y_0}(\holoL)$.
	The group $\widetilde{\Gamma}_{\widetilde{\LL}}=\RR^{n}/\widetilde{\LL} \ \dotplus \widetilde{J}$ is contained in  $\Gamma_{\widetilde{\LL}}$, but does not necessarily coincide with it, since the inclusion $\widetilde{J}\subset H_{\tilde\LL}$ may be strict.
This is the case in Example~\ref{ex2} above, other examples appear in  \cite{Olicalab15}.
%	Examples where this is the case are given in \cite{Olicalab15}.
	
\begin{theorem}\label{irreducible_projection}
Let $V$ be the $\gamaL$-irreducible subspace of $\XX_{\LL}$ generated by the orbit of $k\in\LL^*$.
The projection $\Pi_{y_{0}}(V)$ is a $\widetilde{\Gamma}_{\widetilde{\LL}}$-invariant subspace of $\XX_{\widetilde{\LL}}$.
If, moreover, condition \eqref{maisOrbitas} holds
%$$
% \frac{\vert \holoL \vert}{\vert\Jup\vert}>
% \frac{\vert\Sigma_{k}\vert}{\vert\Jup   \cap\Sigma_{k}\vert}
%$$
then $\Pi_{y_{0}}(V)$ is  the sum of more than one $\widetilde{\Gamma}_{\widetilde{\LL}}$-irreducible subspace,
for almost all values of $y_0\in\RR$. 
\end{theorem}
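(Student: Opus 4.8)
The plan is to treat the two assertions separately: the first follows directly from the equivariance bookkeeping of Subsection~\ref{subsecProjections}, and the second from translating Theorem~\ref{sepa} and Corollary~\ref{moreProjectedOrbits} into the language of irreducible subspaces.

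For the invariance claim I would first note that $\Pi_{y_0}(\XX_\LL)\subseteq\XX_{\widetilde{\LL}}$ by Lemma~\ref{lemaLstar} and Proposition~\ref{corollaryThPinho}, so $\Pi_{y_0}(V)$ is a linear subspace of $\XX_{\widetilde{\LL}}$. By Proposition~\ref{irredutivel}, $V$ is both $\mathbb{T}^{n+1}$- and $\holoL$-invariant, and the computations displayed in Subsection~\ref{subsecProjections} show that, writing $g=\Pi_{y_0}(f)$, a translation of $g$ by $v\in\RR^n/\widetilde{\LL}$ equals $\Pi_{y_0}$ of an appropriate $\LL$-translate of $f$, while for $\alpha\in\widetilde{J}$ one has $\Pi_{y_0}(\alpha_+\cdot f)=\alpha\cdot g$ (when $\alpha_+\in\holoL$) or $\Pi_{y_0}(\alpha_-\cdot f)=\alpha\cdot g$ (when $(0,y_0)\in\LL$ and $\alpha_-\in\holoL$). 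Since in each case the function being projected stays in $V$, this shows $\Pi_{y_0}(V)$ is invariant under $\RR^n/\widetilde{\LL}\dotplus\widetilde{J}=\widetilde{\Gamma}_{\widetilde{\LL}}$.

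For the decomposition I would write $V=V_{k_1}\oplus\cdots\oplus V_{k_s}$ with $\{\pm k_1,\dots,\pm k_s\}=\holoL\cdot k$ (Proposition~\ref{irredutivel}) and $k_j=(\tilde k_j,z_j)$. The computation already used in the proof of Theorem~\ref{eq_periodic_functions} gives $\Pi_{y_0}(\omega_{k_j})=c_j\,\omega_{\tilde k_j}$ with $c_j=\int_0^{y_0}e^{2\pi i z_j y}\,dy$, so $\Pi_{y_0}(V_{k_j})=V_{P(k_j)}$ when $c_j\neq0$ and $\Pi_{y_0}(V_{k_j})=\{0\}$ exactly when $z_jy_0\in\ZZ\setminus\{0\}$. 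I would then discard the exceptional set $N$ consisting of the $y_0$ with $(0,y_0)\in\LL$ together with the finitely many discrete sets $\{y_0:z_jy_0\in\ZZ\setminus\{0\}\}$; this $N$ is countable, hence null. For $y_0\notin N$ the group $\widetilde{J}$ and the set $\Jup$ are those occurring in \eqref{maisOrbitas}, all $c_j$ are nonzero, and $\Pi_{y_0}(V)=\sum_j V_{P(k_j)}$, the wave vectors being exactly the elements of $P(\holoL\cdot k)$ up to sign. By Theorem~\ref{sepa} this set is the disjoint union $\bigcup_{i=1}^r\widetilde{J}\cdot u_i$ of $\widetilde{J}$-orbits in $\widetilde{\LL}^*$; applying the Dionne--Golubitsky argument behind Proposition~\ref{irredutivel} to $\widetilde{\Gamma}_{\widetilde{\LL}}$ (its proof uses only that the finite factor $\widetilde{J}$ preserves $\widetilde{\LL}^*$, which holds since $\widetilde{J}$ lies in the holohedry of $\widetilde{\LL}$) shows that the subspace $W_i$ generated by the $i$-th orbit is $\widetilde{\Gamma}_{\widetilde{\LL}}$-irreducible, and since distinct $\widetilde{J}$-orbits share no wave vector, $\Pi_{y_0}(V)=W_1\oplus\cdots\oplus W_r$. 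Finally Corollary~\ref{moreProjectedOrbits} identifies \eqref{maisOrbitas} with $r>1$, which yields the conclusion for every $y_0\notin N$, hence for almost all $y_0$.

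The main obstacle is structural rather than computational. One part of it is making sure the characterisation of irreducibles in Proposition~\ref{irredutivel} is legitimately available for $\widetilde{\Gamma}_{\widetilde{\LL}}$, even though $\widetilde{J}$ can be a strict subgroup of the holohedry $H_{\widetilde{\LL}}$ (so that $\widetilde{\Gamma}_{\widetilde{\LL}}$ may be strictly smaller than $\Gamma_{\widetilde{\LL}}$); the other is the careful handling of the exceptional $y_0$ — both those with $(0,y_0)\in\LL$, where $\widetilde{J}$ and $\Jup$ jump to a larger group and \eqref{maisOrbitas} is no longer the relevant inequality, and those annihilating some coefficient $c_j$ — and checking that off the resulting null set the stated decomposition holds with exactly $r$ summands.
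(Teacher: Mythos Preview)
Your proposal is correct and follows essentially the same route as the paper: both compute $\Pi_{y_0}(\omega_{(\tilde k,z)})=c\,\omega_{\tilde k}$, invoke Theorem~\ref{sepa} and Corollary~\ref{moreProjectedOrbits} to split $P(\holoL\cdot k)$ into $\widetilde{J}$-orbits, and discard a null set of $y_0$ to ensure the relevant coefficients are nonzero. The only minor differences are that the paper proves $\widetilde{\Gamma}_{\widetilde{\LL}}$-invariance directly at the level of wave vectors (observing that $\alpha_*$ preserves the complement of $Z_{y_0}=\{(x,ny_0):n\in\ZZ\setminus\{0\}\}$) rather than via the equivariance identities of Subsection~\ref{subsecProjections}, and it excludes only the smaller set $K=\tfrac{1}{z_1}\ZZ\cup\tfrac{1}{z_2}\ZZ$ rather than your larger but still countable $N$; your extra care about the $y_0$-dependence of $\widetilde{J}$ and about the applicability of Proposition~\ref{irredutivel} to $\widetilde{\Gamma}_{\widetilde{\LL}}$ is warranted but does not change the argument.
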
	

\begin{proof}
First note that $\Pi_{y_0}\left(\omega_{(k_1,k_2)}\right)=c_0\omega_{k_1}$ with $c_0=\int_0^{y_0} \mathrm{e}^{2\pi i k_2 y} dy$ and thus, $c_0=0$ if and only if $k_2y_0\in\ZZ\setminus \{0\}$.
Therefore, $\omega_{k_1}(x)\in  \Pi_{y_{0}}(V)$ if and only if $k_1\in P\left(\holoL\cdot k\setminus Z_{y_0}\right)$
where
$$
Z_{y_0}=\{(x,ny_0)\in\RR^{n+1}: \ n\in\ZZ\setminus\{0\} \} .
$$
If $\holoL\cdot k\subset Z_{y_0}$ then it follows that  $ \Pi_{y_{0}}(V) = \lbrace 0\rbrace$, a subspace that is trivially $\widetilde{\Gamma}_{\widetilde{\LL}}$-invariant.

Suppose then that $\hat{k}\in \holoL\cdot k\setminus Z_{y_0}\ne\emptyset$ and hence $\omega_{k_1}(x)\in  \Pi_{y_{0}}(V)$.
If $\alpha\in\widetilde{J}$ then either $\alpha_+$ or $\alpha_-\in\holoL$, let $\alpha_*$ be the appropriate one.
Then $\alpha_* \hat{k}\not\in Z_{y_0}$, implying that $\omega_{\alpha k_1}(x)\in  \Pi_{y_{0}}(V)$.
Since from \eqref{gamaOfOmega} we know that
$\alpha \omega_{k_1}(x)=\omega_{\alpha k_1}(x)$,  this shows that $ \Pi_{y_{0}}(V)$ is 
$\widetilde{\Gamma}_{\widetilde{\LL}}$-invariant.

When condition \eqref{maisOrbitas} holds, by Corollary~\ref{moreProjectedOrbits} we know that $P(\holoL\cdot k)$ contains at least two distinct $\widetilde{J}$-orbits, say $\widetilde{J}\cdot P(u_1,z_1)$ and $\widetilde{J}\cdot P(u_2,z_2)$.
The set $K =\displaystyle \frac{1}{z_{1}}\ZZ\cup \frac{1}{z_{2}}\ZZ$ is a discrete subset of $\RR$
and for all $y_{0} \in \RR$ such that $y_{0} \notin K$, the projection $\Pi_{y_{0}}(V)$ has at least two irreducible components.
\end{proof}

\begin{corollary}
	Suppose that $V$ is a $\gamaL$-irreducible subspace of $\XX_{\LL}$ generated by the orbit of $k\in\LL^*$.
If there exist $(u_1,z_1)$ and $(u_2,z_2)$ in $\holoL\cdot k$ with $z_{1} \neq \pm z_{2}$, then for almost all $y_0\in\RR$ the projection of a single mode is a mode interaction. 
%	$(\widetilde{k},z)$. 
%	If there exist $z_{1} \neq \pm z_{2} \in \lbrace \holoL\cdot k\mid_{2}\rbrace$,
%	$z_{1} \neq \pm z_{2} \in \lbrace \holoL(\widetilde{k},z)\mid_{2}\rbrace$, 
%	generically 
%	the projection of a single mode is a mode interaction. 
\end{corollary}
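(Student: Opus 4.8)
The plan is to reduce the corollary to Theorem~\ref{irreducible_projection} by showing that the hypothesis ``there exist $(u_1,z_1),(u_2,z_2)\in\holoL\cdot k$ with $z_1\neq\pm z_2$'' is sufficient to guarantee that the projection $\Pi_{y_0}(V)$ decomposes into more than one $\widetilde\Gamma_{\widetilde\LL}$-irreducible piece for almost all $y_0$. By Theorem~\ref{irreducible_projection}, it would suffice to verify condition \eqref{maisOrbitas}, i.e.\ $|\holoL|/|\Jup| > |\Sigma_k|/|\Jup\cap\Sigma_k|$. However, I would argue it is cleaner to go back to Theorem~\ref{sepa} and Corollary~\ref{moreProjectedOrbits}: condition \eqref{maisOrbitas} is precisely equivalent, by Corollary~\ref{moreProjectedOrbits}, to $P(\holoL\cdot k)$ containing more than one $\widetilde J$-orbit, which in turn (by the proof of Theorem~\ref{sepa}) is equivalent to $\holoL\cdot k\neq S_k\cdot k=\Jup\cdot k$. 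So the real content to be extracted is: if $\holoL\cdot k$ contains two elements whose last coordinates are not equal up to sign, then $\holoL\cdot k\neq\Jup\cdot k$.

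First I would establish that claim. By Proposition~\ref{caracterizacao}\eqref{p61}--\eqref{p62}, every element of $S_k\cdot k=\Jup\cdot k$ has last coordinate equal to $\pm z$, where $k=(u,z)$ (write $z_1=z$ after possibly relabelling so that $(u_1,z_1)=k$; the hypothesis is symmetric in the two points of the orbit, and any point of the orbit can be taken as the base point since $S_{\delta k}\cdot(\delta k)\subset\holoL\cdot k$ for every $\delta$). Now $(u_2,z_2)\in\holoL\cdot k$ has last coordinate $z_2$ with $z_2\neq\pm z_1$, so $(u_2,z_2)\notin\Jup\cdot k$. Hence $\holoL\cdot k\supsetneq\Jup\cdot k=S_k\cdot k$, so $P(\holoL\cdot k)$ strictly contains $\widetilde J\cdot P(k)$, and therefore by Theorem~\ref{sepa} it is a disjoint union of at least two $\widetilde J$-orbits. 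By Corollary~\ref{moreProjectedOrbits} this is equivalent to \eqref{maisOrbitas}.

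Then I would invoke Theorem~\ref{irreducible_projection} directly: since \eqref{maisOrbitas} holds, $\Pi_{y_0}(V)$ is the sum of more than one $\widetilde\Gamma_{\widetilde\LL}$-irreducible subspace for almost all $y_0\in\RR$ (the excluded set being the discrete set $K=\tfrac1{z_1}\ZZ\cup\tfrac1{z_2}\ZZ$, or more conservatively the union over all nonzero last coordinates occurring in $\holoL\cdot k$). Unwinding Definition~\ref{mode}: $V$ being $\gamaL$-irreducible means a bifurcation with kernel $V$ is a single mode; $\Pi_{y_0}(V)$ decomposing into $r>1$ irreducible components under $\widetilde\Gamma_{\widetilde\LL}$ means the projected problem has an $r$-mode interaction. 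This is exactly the assertion ``the projection of a single mode is a mode interaction''.

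The only genuine obstacle is bookkeeping about which point of the orbit serves as the base point $k$ in the definitions of $S_k$, $\Sigma_k$, $\Jup\cap\Sigma_k$: the hypothesis is phrased symmetrically in $(u_1,z_1)$ and $(u_2,z_2)$, whereas Corollary~\ref{moreProjectedOrbits} and Theorem~\ref{irreducible_projection} are phrased with a distinguished generator $k$. I expect this to be harmless because $\holoL\cdot k=\holoL\cdot(\delta k)$ for any $\delta\in\holoL$, so one may simply take $k:=(u_1,z_1)$ without loss of generality and the conclusion of Theorem~\ref{irreducible_projection} does not depend on the choice of generator of $V$. A secondary, purely cosmetic point is to note the measure-zero exceptional set explicitly so that ``almost all $y_0$'' is justified; this is already handled inside the proof of Theorem~\ref{irreducible_projection}.
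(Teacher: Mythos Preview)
Your proof is correct and follows essentially the same line as the paper's. The key observation is identical: via Proposition~\ref{caracterizacao}\eqref{p61}--\eqref{p62} (equivalently, the norm argument illustrated in Figure~\ref{figJorbitas}), elements of $\holoL\cdot k$ with distinct last coordinates up to sign project to distinct $\widetilde J$-orbits. The only difference is packaging: you route this through Corollary~\ref{moreProjectedOrbits} to verify \eqref{maisOrbitas} and then invoke Theorem~\ref{irreducible_projection}, whereas the paper simply states directly that the $\widetilde J$-orbits of $u_1$ and $u_2$ are distinct and then writes down the discrete exceptional set $K=\tfrac{1}{z_1}\ZZ\cup\tfrac{1}{z_2}\ZZ$ by hand, effectively repeating the last paragraph of the proof of Theorem~\ref{irreducible_projection}.
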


\begin{proof}
If $z_{1} \neq \pm z_{2}$, then by item \eqref{p62} of Proposition~\ref{caracterizacao}, the $\widetilde{J}$-orbits of $u_1$ and $u_2$ are distinct, as in Figure~\ref{figJorbitas}.
Since $K =\displaystyle \frac{1}{z_{1}}\ZZ\cup \frac{1}{z_{2}}\ZZ$ is a discrete subset of $\RR$, for all $y_0\not\in K$
the projection $\Pi_{y_0}(V)$ has at least two $\widetilde{\Gamma}_{\widetilde{\LL}}$-irreducible components.
Hence, we have  an interaction of at least two modes.
%
%	Suppose that there exist $z_{1} \neq \pm z_{2} \in \lbrace \holoL(\widetilde{k},z)\mid_{2}\rbrace$. Then there exist at least two $\widetilde{\Gamma}_{\widetilde{\LL}}$-irreducible subspaces terms on the projection of $V$.
%	 
%	 Without loss of generality suppose  $z_{1} \neq \pm z_{2} \neq 0$. Then, the set $K =\displaystyle \frac{1}{z_{1}}\ZZ\cup \frac{1}{z_{2}}\ZZ$ is a discrete subset of $\RR$. Therefore, for all $y_{0} \in \RR$ such that $y_{0} \notin K$, we have an interaction of at least two modes.
\end{proof}

We return to the examples of Section~\ref{secL*}.

\setcounter{example}{1}
\addtocounter{example}{-1}
\begin{example}
For the simple cubic lattice $\LL_1$, it is always true that $\widetilde{\LL}_1=P(\widetilde{\LL}\cap\{y=0\})$, for any $y_0$.
Hence, the projection of a $\Gamma_{\LL_1}$-irreducible subspace of $\XX_{\LL_1}$ is never the trivial subspace.
The number of $\widetilde{J}$-irreducible components of $\Pi_{y_0}(V)$, where $V$ is generated by $k\in\LL_1^*$, is shown in
Table~\ref{tablePL1}  for each $k\in\LL_1^*$.
For instance,  when $k=(a,b,0)$, $a\ne b \in\ZZ\setminus \{0\}$ the projection $\widetilde{V}=\Pi_{y_0}(V)$ is the sum of three $\widetilde{\Gamma}_{\widetilde{\LL_1}}$-irreducible subspaces, generated by the orbits of $(a,0)$, of $(0,b)$ and of $(a,b)$.
For $k=(a,0,0)$, $a\ne 0$, the projection $\Pi_{y_0}(V)$ has the two irreducible components generated by the orbits of $(a,0)$ and of $(0,0)$, one of the components consists of constant functions. 
The only  case when the projection is irreducible is when  $V$ is generated by the orbit of $k=(a,a,a)$, $a\ne 0$, where the only irreducible component is generated by the orbit of $(a,a)$.
\hfill {\Large$\bullet$}
 \end{example}

%\begin{example}
%For the simple cubic lattice $\LL_1$, it is always true that $\widetilde{\LL}_1=P(\widetilde{\LL}\cap\{y=0\})$, for any $y_0$.
%Hence, the projection of a $\Gamma_{\LL_1}$-irreducible subspace of $\XX_{\LL_1}$ is never the trivial subspace.
%The number of $\widetilde{J}$-irreducible components of $\Pi_{y_0}(V_k)$ for each $k\in\LL_1^*$ is shown in
%Table~\ref{tablePL1}.
%For instance,  when $k=(a,b,0)$, $a\ne b \in\ZZ\setminus \{0\}$ the projection $\widetilde{V}=\Pi_{y_0}(V)$ is the sum of three $\widetilde{\Gamma}_{\widetilde{\LL_1}}$-irreducible subspaces, generated by the orbits of $(a,0)$, of $(0,b)$ and of $(a,b)$.
%For $k=(a,0,0)$, $a\ne 0$, the projection $P(V_k)$ has the two irreducible components generated by the orbits of $(a,0)$ and of $(0,0)$, one of the components consists of constant functions. 
%The only  case when the projection is irreducible is when  $V$ is generated by the orbit of $k=(a,a,a)$, $a\ne 0$, where the only irreducible component is generated by the orbit of $(a,a)$.
%\hfill {\Large$\square$}
% \end{example}

\begin{table}
\caption{For the simple cubic lattice $\LL_1$ in Example~\ref{ex1}, we know that $ {\vert \holoL \vert}/{\vert\Jup\vert}=3$ and for each $k\in\LL_1^*$ this table gives the numbers ${\vert\Sigma_{k}\vert}$ and ${\vert\Jup   \cap\Sigma_{k}\vert}$ and
 the number $N$ of non-trivial  $\widetilde{J}$-irreducible components of the projection of the $\Gamma_{\LL_1}$-irreducible subspace  $V\in\XX_{\LL_1}$ generated by the orbit of $k$.
Here $a \neq b \neq c\neq a$ with $a,b,c\in\ZZ\setminus\{0\}$.}\label{tablePL1}
%\caption{For the simple cubic lattice $\LL_1$ in Example~\ref{ex1}, we know that $ {\vert \holoL \vert}/{\vert\Jup\vert}=3$
%% generators $P(k)$ of 
%and for each $k\in\LL_1^*$ this table gives the numbers ${\vert\Sigma_{k}\vert}$ and ${\vert\Jup   \cap\Sigma_{k}\vert}$ and
% the number $N$ of non-trivial  $\widetilde{J}$-irreducible components of the projection of the $\Gamma_{\LL_1}$-irreducible subspace  $V_k\in\XX_{\LL_1}$ generated by the orbit of $k$.
%Here $a \neq b \neq c\neq a$ with $abc\neq 0$.}\label{tablePL1}
\begin{center}\begin{tabular}{l|l|l|l|l|l|l|l}
$k$& $(a,0,0)$&$(a,b,b)$&$(a,a,b)$&$(a,a,0)$&$(a,b,0)$&$(a,b,c)$&$(a,a,a)$\\
\hline
${\vert\Sigma_{k}\vert}$&8&2&2&4&2&1&6\\
\hline
${\vert\Jup   \cap\Sigma_{k}\vert}$&4&1&2&4&2&1&2\\
\hline
$N$&2&2&2&2&3&3&1\\
%\hline
%generators&$(a,0)$, $(0,0)$&$(a,b)$, $(b,b)$&$(a,a)$, $(a,b)$&$(a,a)$, $(a,0)$&$(a,b)$, $(a,0)$, $(b,0)$&$(a,b)$, $(a,c)$, $(b,c)$&$(a,a)$
\end{tabular}\end{center}
\end{table}

\begin{table}
\caption{For the rotated cubic lattice $\LL_2$ in Example~\ref{ex2}, and for $y_0= m\sqrt{6}/2$, $m\in\ZZ\setminus\{0\}$,
we have that $ {\vert \holoL \vert}/{\vert\Jup\vert}=4$.
For each $k\in\LL_1^*$ this table gives the transformed vector in $\LL_2^*$, the numbers ${\vert\Sigma_{k}\vert}$, the set $\Jup   \cap\Sigma_{k}$, the number $|\Sigma_k|/|\Sigma_k\cap\Jup|$ and the restriction on $y_0$.
If $V$ is the irreducible subspace generated by the orbit of $k$, then the only case where $\Pi_{y_0}(V)$ has only one irreducible component is $k=\left(a,\frac{\sqrt{3}a}{3},\frac{\sqrt{6}a}{3}\right)$. 
A small change in $y_0$ destroys this situation, see Table~\ref{tablePL2}.
Here $a \neq b \neq c\neq a$ with $a,b, c$ and $n\in\ZZ\setminus\{0\}$.}\label{table2PL2}
%\caption{For the rotated cubic lattice $\LL_2$ in Example~\ref{ex2}, and $y_0= m\sqrt{6}/2$, $m\in\ZZ\setminus\{0\}$,
%we have that $ {\vert \holoL \vert}/{\vert\Jup\vert}=4$.
%% generators $P(k)$ of 
%For each $k\in\LL_1^*$ this table gives the transformed vector in $\LL_2^*$, the numbers ${\vert\Sigma_{k}\vert}$, the set $\Jup   \cap\Sigma_{k}$, the number $|\Sigma_k|/|\Sigma_k\cap\Jup|$ and the restriction on $y_0$.
%The only case where $\Pi_{y_0}(V_k)$ has only one irreducible component is $k=\left(a,\frac{\sqrt{3}a}{3},\frac{\sqrt{6}a}{3}\right)$, but a small change in $y_0$ destroys this situation, see Table~\ref{tablePL2}.
%Here $a \neq b \neq c\neq a$ with $abc\neq 0$, $n\in\ZZ\setminus\{0\}$.}\label{table2PL2}
\begin{tabular}{clcccc}
%{l|l|c|lc|lc}
$k\in\LL_1^*$	&	$\sqrt{2}A k \in\LL_2^*$
%$\sqrt{2} B k$	
&	$|\Sigma_k|$	&	$\Sigma_k\cap\Jup$	&	$|\Sigma_k|/|\Sigma_k\cap\Jup|$	&	restriction	\\
\hline\hline
$(a,0,0)$	&	$\left(a,\frac{\sqrt{3}a}{3},\frac{\sqrt{6}a}{3}\right)$	&	8	&	$\{ Id_3,-R_{(0,1,1)}\}$	&	4	&	-	\\ \hline
$(a,b,b)$	&	$\left(a-b,\sqrt{3}\left(\frac{a}{3}+b\right),\frac{\sqrt{6}a}{3}\right)$	&	2	&	$\one$	&	2	&	$y_0\ne\frac{\sqrt{6}}{2a}n$	\\ \hline
$(0,a,a)$	&	$\left(-a,\sqrt{3}a,0\right)$	&	4	&	$\{ Id_3,R_{(0,1,1)}\}$	&	2	&	-	\\ \hline
$(a,b,0)$	&	$\left(a-b,\frac{\sqrt{3}}{3}(a+b),\frac{\sqrt{6}}{3}(a+b)\right)$	&	2	&	$\one$	&	2	&	$y_0\ne\frac{\sqrt{6}}{2(a+b)}n$	\\ \hline
$(a,b,c)$	&	$\left(a-b,\frac{\sqrt{3}}{3}(a+b+2c),\frac{\sqrt{6}}{3}(a+b-c)\right)$	&	1	&	$\one$	&	1	&	$y_0\ne\frac{\sqrt{6}}{2(a+b-c)}n$	\\ \hline
$(a,a,a)$	&	$\left(0,\frac{4\sqrt{3}a}{3},\frac{\sqrt{6}a}{3}\right)$	&	6	&	$\{ Id_3,R_{(1,-1,0)}\}$	&	3	&	$y_0\ne\frac{\sqrt{6}}{2a}n$	\\ \hline
$(a,b,a+b)$	&	$\left(a-b,\frac{2\sqrt{3}}{3}(a+b),0\right)$	&	1	&	$\one$	&	1	&	-	\\ \hline
$(a,0,a)$	&	$\left(a,\frac{2\sqrt{3}a}{3},0\right)$	&	4	&	$\{ Id_3,R_{(1,0,1)}\}$	&	2	&	-	\\ \hline
$(a,a,2a)$	&	$\left(0,\frac{4\sqrt{3}a}{3},0\right)$	&	2	&	$\{ Id_3,-R_{(1,-1,0)}\}$	&	1	&	-			\end{tabular}
\end{table}

Example~\ref{ex2} is  more typical, because the projection depends on $y_0$.
\begin{example}
The holohedry $H_{\LL_2}$ of the rotated cubic lattice $\LL_2$ has order 48.
The subgroup $\Jup$ has order 12 if $y_0= n\sqrt{6}/2$ with $n\in\ZZ\setminus\{0\}$, otherwise it has order 6.
Tables~\ref{table2PL2} and \ref{tablePL2} give all the information necessary to apply  Theorem~\ref{irreducible_projection} to this lattice.
For each $k$ there is at most a discrete set of values of $y_0$ for which a single mode is projected into a single mode.
Generically the projection is a mode interaction.
\hfill {\Large$\bullet$}
\end{example}

\begin{table}
\caption{For the rotated cubic lattice $\LL_2$ in Example~\ref{ex2}, and $y_0\ne m\sqrt{6}/2$, $m\in\ZZ\setminus\{0\}$,
we have that $ {\vert \holoL \vert}/{\vert\Jup\vert}=8$.
For each $k\in\LL_1^*$ this table gives the transformed vector in $\LL_2^*$, the numbers ${\vert\Sigma_{k}\vert}$, the set $\Jup   \cap\Sigma_{k}$, the number $|\Sigma_k|/|\Sigma_k\cap\Jup|$ and the restriction on $y_0$.
If $V$ is the irreducible  subspace
%subspae 
generated by the orbit of $k$, then the
 projection $\Pi_{y_0}(V)$ has always more than one irreducible component as long as the restriction on $y_0$ holds.
Here $a \neq b \neq c\neq a$ with $a,b, c$ and $n\in\ZZ\setminus\{0\}$.}\label{tablePL2}
%\caption{For the rotated cubic lattice $\LL_2$ in Example~\ref{ex2}, and $y_0\ne m\sqrt{6}/2$, $m\in\ZZ\setminus\{0\}$,
%we have that $ {\vert \holoL \vert}/{\vert\Jup\vert}=8$.
%% generators $P(k)$ of 
%For each $k\in\LL_1^*$ this table gives the transformed vector in $\LL_2^*$, the numbers ${\vert\Sigma_{k}\vert}$, the set $\Jup   \cap\Sigma_{k}$, the number $|\Sigma_k|/|\Sigma_k\cap\Jup|$ and the restriction on $y_0$.
%The projection $\Pi_{y_0}(V_k)$ has always more than one irreducible component as long as the restriction on $y_0$ holds.
%Here $a \neq b \neq c\neq a$ with $abc\neq 0$, $n\in\ZZ\setminus\{0\}$.}\label{tablePL2}
\begin{tabular}{clcccc}
$k\in\LL_1^*$	&	$\sqrt{2}A k \in\LL_2^*$
%$\sqrt{2} B k$	
&	$|\Sigma_k|$	&	$\Sigma_k\cap\Jup$	&	$|\Sigma_k|/|\Sigma_k\cap\Jup|$	&	restriction	\\
\hline\hline
$(a,0,0)$	&	$\left(a,\frac{\sqrt{3}a}{3},\frac{\sqrt{6}a}{3}\right)$	&	8	&	$\{ Id_3,-R_{(0,1,1)}\}$	&	4	&	$y_0\ne\frac{\sqrt{6}}{2a}n$	\\ \hline
$(a,b,b)$	&	$\left(a-b,\sqrt{3}\left(\frac{a}{3}+b\right),\frac{\sqrt{6}a}{3}\right)$	&	2	&	$\one$	&	2	&	$y_0\ne\frac{\sqrt{6}}{2a}n$	\\ \hline
$(0,a,a)$	&	$\left(-a,\sqrt{3}a,0\right)$	&	4	&	$\one$	&	4	&	-	\\ \hline
$(a,b,0)$	&	$\left(a-b,\frac{\sqrt{3}}{3}(a+b),\frac{\sqrt{6}}{3}(a+b)\right)$	&	2	&	$\one$	&	2	&	$y_0\ne\frac{\sqrt{6}}{2(a+b)}n$	\\ \hline
$(a,b,c)$	&	$\left(a-b,\frac{\sqrt{3}}{3}(a+b+2c),\frac{\sqrt{6}}{3}(a+b-c)\right)$	&	1	&	$\one$	&	1	&	$y_0\ne\frac{\sqrt{6}}{2(a+b-c)}n$	\\ \hline
$(a,a,a)$	&	$\left(0,\frac{4\sqrt{3}a}{3},\frac{\sqrt{6}a}{3}\right)$	&	6	&	$\one$	&	6	&	$y_0\ne\frac{\sqrt{6}}{2a}n$	\\ \hline
$(a,b,a+b)$	&	$\left(a-b,\frac{2\sqrt{3}}{3}(a+b),0\right)$	&	1	&	$\one$	&	1	&	-	\\ \hline
$(a,0,a)$	&	$\left(a,\frac{2\sqrt{3}a}{3},0\right)$	&	4	&	$\one$	&	4	&	-	\\ \hline
$(a,a,2a)$	&	$\left(0,\frac{4\sqrt{3}a}{3},0\right)$	&	2	&	$\{ Id_3,-R_{(1,-1,0)}\}$	&	1	&	-	\end{tabular}
\end{table}

\begin{figure}[htt]
\includegraphics[scale=0.25]{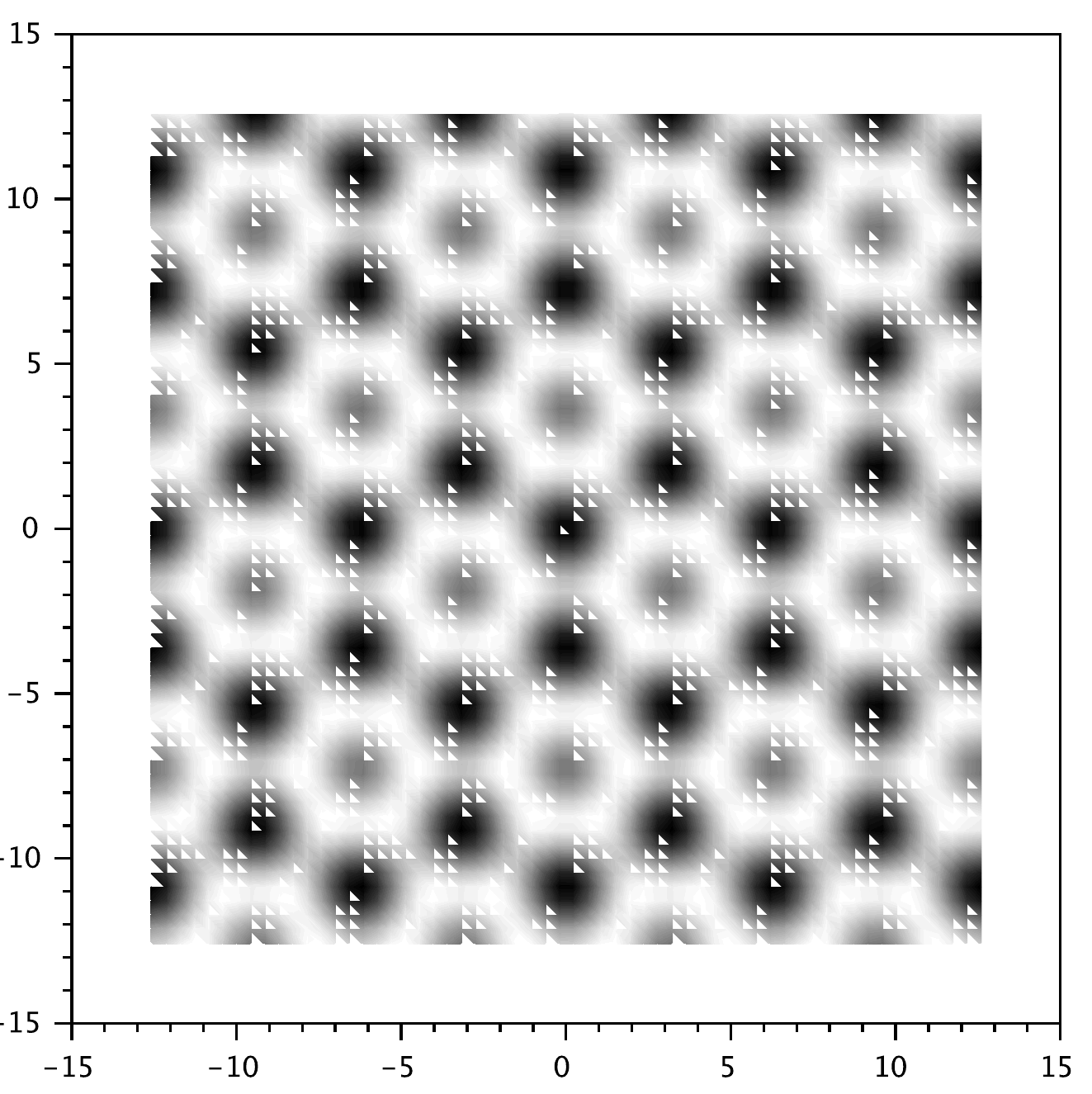}
\caption{Black eye pattern, obtained projecting the function $I_{(1/2,\sqrt{3}/2,0)}\in\XX_{\LL_3}$ with a band of projection of width $y_0=\sqrt{6}/4$. }
\label{figBlackEye}
\end{figure}

\begin{example}\label{exBCC}
The body centered cubic lattice $\LL_3$  was  used in \cite{gomes99} to obtain black-eye patterns by projection, 
like those of Figure~\ref{figBlackEye}.
It is generated by $(1/2,\sqrt{3}/2,0)$, $(1,0,0)$ and $(1/2,\sqrt{3}/6,-\sqrt{6}/12)$.
It contains the lattice $\LL_2$ of Example~\ref{ex2} and the two lattices share the same holohedry.
Generators for $\LL_3^*\subset\LL_2^*$ are $(1,\sqrt{3},0)$, $(2,0,0)$ and $(1,-\sqrt{3}/3,2\sqrt{6}/3)$.
Since $\LL_3^*$ does not contain any vector of the form $(a,\sqrt{3}a/3,\sqrt{6}a/3)$, then the projection of every $H_{\LL_3}$-orbit in $\LL_3^*$ contains more than one $\widetilde{J}$-orbit.
For instance (see Figure~\ref{figL3Jorbits}), the orbit $H_{\LL_3}\cdot (1,\sqrt{3},0)$ has 12 elements:
$$
%H_{\LL_3}\cdot (1,\sqrt{3},0)=
\left\{
\pm (1,\sqrt{3},0), \pm(2,0,0), \pm(-1,\sqrt{3},0),
\pm (1,\frac{\sqrt{3}}{3},\frac{-2\sqrt{6}}{3}), \pm (0,\frac{2\sqrt{3}}{3},\frac{2\sqrt{6}}{3}), \pm (-1,\frac{\sqrt{3}}{3},\frac{-2\sqrt{6}}{3})
\right\}
$$
and  for $y_0=\sqrt{6}n/4$, $n\in\ZZ\setminus\{0\}$ its projection consists of two $\widetilde{J}$-orbits, whereas for all other values of $y_0$ it is formed by three $\widetilde{J}$-orbits.

An irreducible subspace $V\subset\XX_{\LL_3}$ generated by the $H_{\LL_3}$-orbit of $k\in\LL_3$ contains a maximally symmetric function 
$$
I_k(x,y,z)=\sum_{\delta\in H_{\LL_3}} \omega_{\delta k}(x,y,z) .
$$
The subspace of $V$ generated by $I_k$ is the fixed-point subspace of the maximal subgroup $H_{\LL_3}\subset \Gamma_{\LL_3}$.
The Equivariant Branching Lemma  \cite[Lemma 1.31]{goluste02} may be applied to obtain patterns bifurcating into this subspace.
The pattern shown in Figure~\ref{figBlackEye} is the projection
$\Pi_{y_0}(I_{(1/2,\sqrt{3}/2,0)})$ for $y_0=\sqrt{6}/4$. 

For $\tilde{k}\in \widetilde{\LL}_3$, let $\widetilde{I}_{\tilde{k}}$ be given by
$$
 \widetilde{I}_{\tilde{k}}=\sum_{\beta\in\widetilde{J}} \omega_{\beta \tilde{k}}(x,y) .
$$

For most values  of $y_0$, the projected group $\widetilde{J}$ is isomorphic to the dihedral group $D_3$ of order 6.
The $H_{\LL_3}$-orbit of $(1/2,\sqrt{3}/2,0)$ is projected into three $\widetilde{J}$-orbits, shown in Figure~\ref{figL3Jorbits}.
This means that $\Pi_{y_0}(I_{(1/2,\sqrt{3}/2,0)})$ is a linear combination of the three $\widetilde{J}$-invariant functions
$\widetilde{I}_{(2,0)}$, $\widetilde{I}_{(1,\sqrt{3}/3)}$ and $\widetilde{I}_{(-1,-\sqrt{3}/3)}$, where the first function is real and  the third function is the complex conjugate of the second.  The patterns created by these functions are shown in the upper part of Figure~\ref{JL3star6figsCor}.
%Figure~\ref{JL3star6fgsBW} (Figure~\ref{JL3star6figsCor}).

For $y_0=\sqrt{6}n/12$, $n\in\ZZ\setminus\{0\}$, the projected group $\widetilde{J}$ is isomorphic to the dihedral group $D_6$ of order 12.
The $H_{\LL_3}$-orbit of $(1/2,\sqrt{3}/2,0)$ is projected into two $\widetilde{J}$-orbits, since the $\widetilde{J}$-orbits of $\pm(1,\sqrt{3}/3)$ coincide, as in  Figure~\ref{figL3Jorbits}.
Therefore  $\widetilde{I}_{(1,\sqrt{3}/3)}=\widetilde{I}_{(-1,-\sqrt{3}/3)}$,
This means that $\Pi_{\sqrt{6}/4}(I_{(1/2,\sqrt{3}/2,0)})$ is a linear combination of the two $\widetilde{J}$-invariant functions
$\widetilde{I}_{(2,0)}$ and $\widetilde{I}_{(1,\sqrt{3}/3)}$.
For the geometrical consequences, see Figure~\ref{JL3star6figsCor}.
%Figure~\ref{JL3star6fgsBW} (Figure~\ref{JL3star6figsCor}).
\hfill {\Large$\bullet$}
\end{example}

\begin{figure}
\includegraphics[scale=0.25]{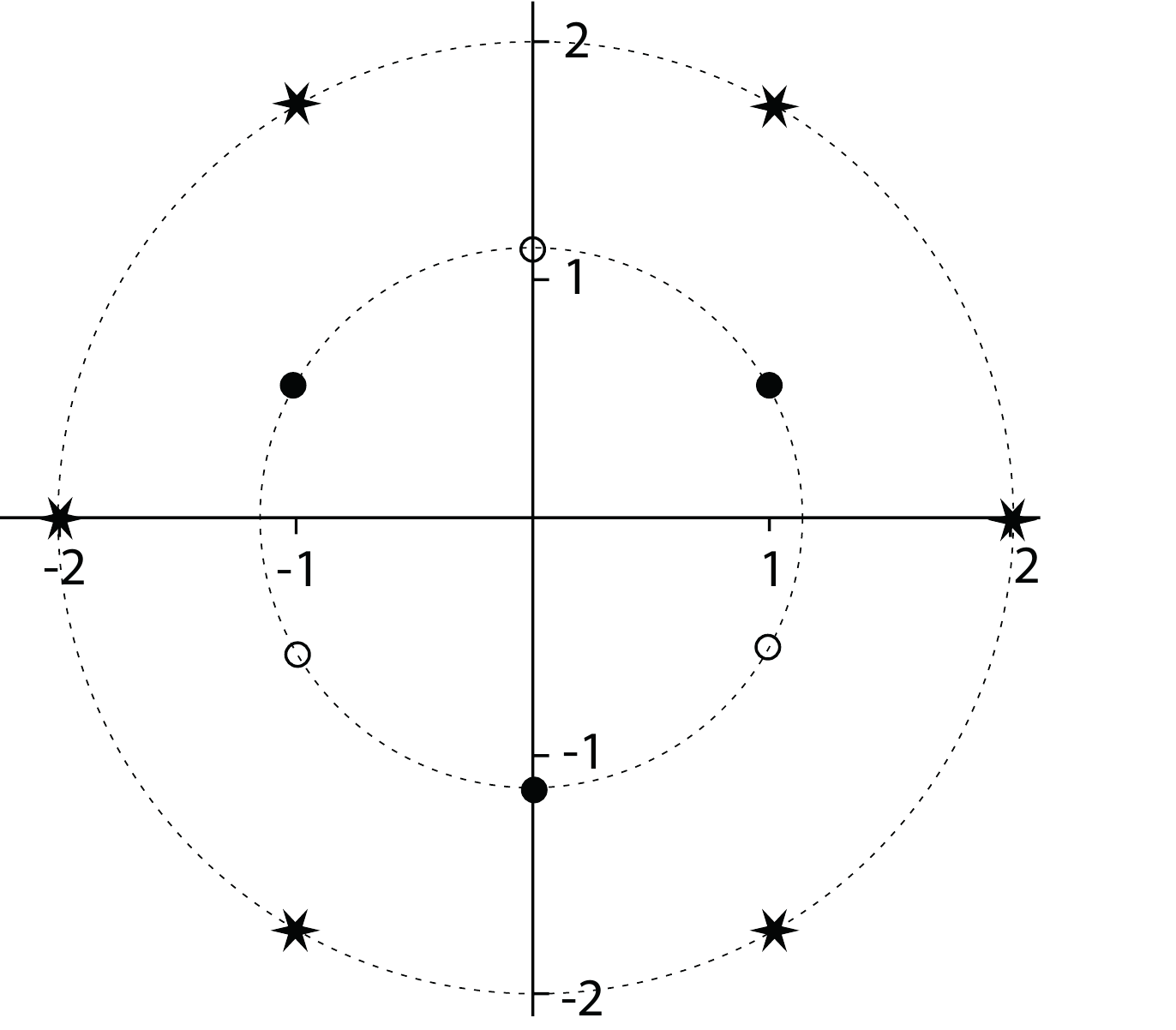}
\caption{The $\widetilde{J}$-orbits in the projection  $P(H_{\LL_3} (1,\sqrt{3},0))$ in Example~\ref{exBCC}.
For $y_0=\sqrt{6}n/12$, $n\in\ZZ\setminus \{0\}$ the projection decomposes into two $\widetilde{J}$-orbits, shown as marked points in the two dotted circles.
For other values of $y_0$ the  points with small norm  in the projection lie in two different $\widetilde{J}$-orbits, indicated here by black or white filled points. This gives rise to a 3-mode interaction.}
\label{figL3Jorbits}
\end{figure}

%\begin{figure}
%\includegraphics[scale=0.5]{JL3star6fgsBW.pdf}
%\caption{Top, left to right: level sets for $\widetilde{I}_{(2,0)}\in\RR$, 
%$\mathrm{Re}(\widetilde{I}_{(1,\sqrt{3}/3)})=\mathrm{Re}(\widetilde{I}_{(1,-\sqrt{3}/3)})$ and 
%$\mathrm{Im}(\widetilde{I}_{(1,\sqrt{3}/3)})=-\mathrm{Im}(\widetilde{I}_{(1,-\sqrt{3}/3)})$ 
%where $\widetilde{J}$ corresponds to $y_0\ne\sqrt{6}n/3$, $n\in\ZZ\setminus\{0\}$.
%Bottom: projections $\Pi_{y_0}\left( I_{(1/2,\sqrt{3}/2,0) }\right)$ for different $y_0$, from left to right:
%$y_0=2\sqrt{6}/3$, $y_0=\sqrt{6}/3$ and $y_0=8\sqrt{6}/15$.
%The pattern in  the centre is a perfect black eye, those on the sides are similar, but with less symmetry.
%Note that the patterns in the bottom are linear combinations of those on top, with different coefficients.
%}
%\label{JL3star6fgsBW}
%\end{figure}

\begin{figure}
\includegraphics[scale=0.5]{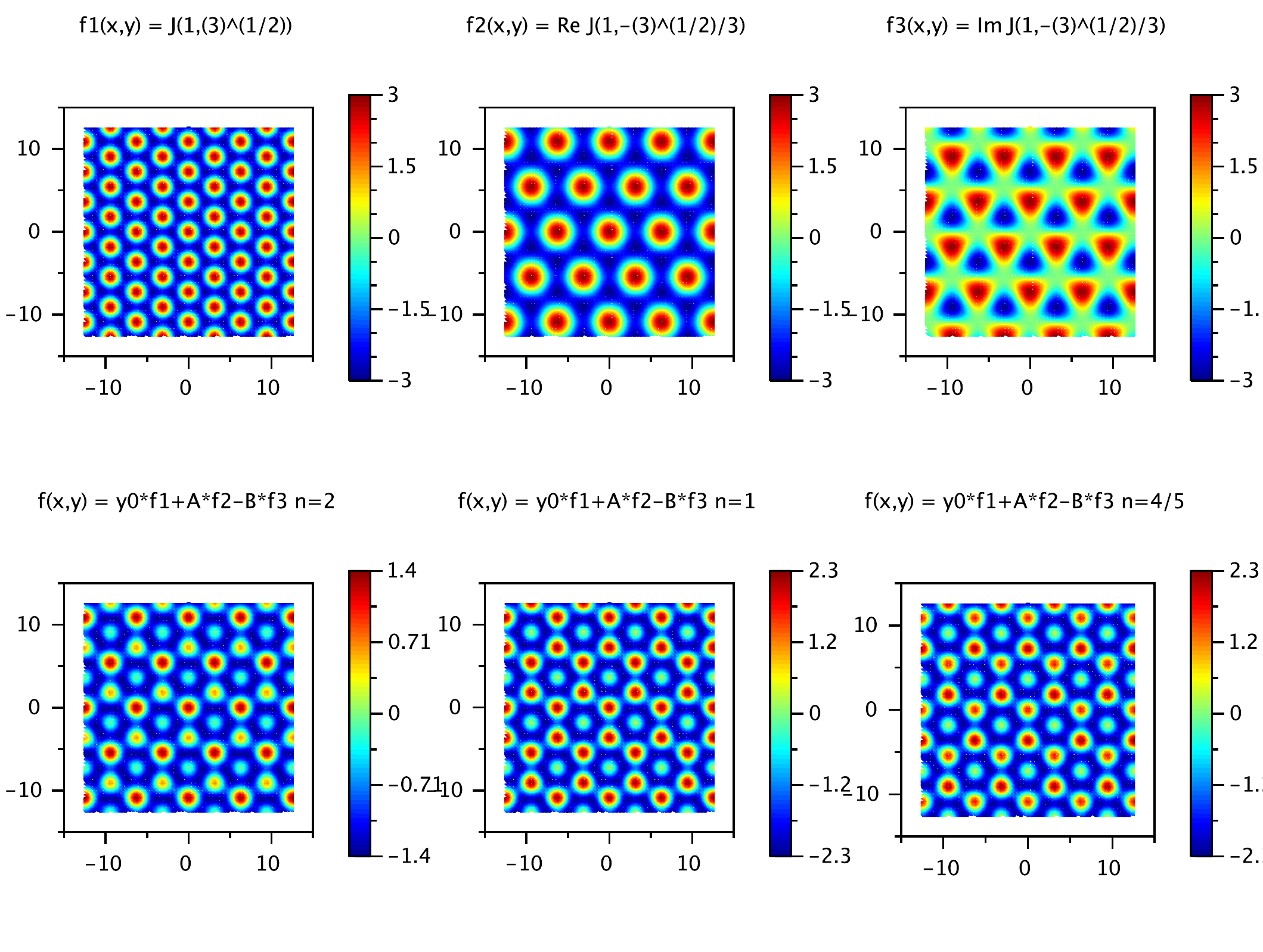}
\caption{Top, left to right: level sets for $\widetilde{I}_{(2,0)}\in\RR$, 
$\mathrm{Re}(\widetilde{I}_{(1,\sqrt{3}/3)})=\mathrm{Re}(\widetilde{I}_{(1,-\sqrt{3}/3)})$ and 
$\mathrm{Im}(\widetilde{I}_{(1,\sqrt{3}/3)})=-\mathrm{Im}(\widetilde{I}_{(1,-\sqrt{3}/3)})$ 
where $\widetilde{J}$ corresponds to $y_0\ne\sqrt{6}n/3$, $n\in\ZZ\setminus\{0\}$.
Bottom: projections $\Pi_{y_0}\left( I_{(1/2,\sqrt{3}/2,0) }\right)$ for different $y_0$, from left to right:
$y_0=2\sqrt{6}/3$, $y_0=\sqrt{6}/3$ and $y_0=8\sqrt{6}/15$.
The pattern in  the centre is the same as in Figure~\ref{figBlackEye}, a perfect black eye.
 Those on the sides are similar, but with less symmetry.
Note that the patterns at the bottom are linear combinations of those on top, with different coefficients.
}
\label{JL3star6figsCor}
\end{figure}

\section*{Acknowledgments}
	The authors would like to thank Professors Miriam Manoel and Gabriela Gomes for useful comments and suggestions.
	
	CMUP is supported by the European Regional Development Fund through the programme COMPETE and by the Portuguese Government through the Funda\c{c}\~ao para a Ci\^encia e a Tecnologia (FCT) under the project PEst-C/MAT/UI0144/2011. J.F. Oliveira was supported by a grant from the Conselho Nacional de Desenvolvimento Cient\'{i}fico e Tecnol\'{o}gico (CNPq) of Brazil.

\end{document}